\documentclass[12pt]{amsart}

\setlength{\textwidth}{6.5 in}
\setlength{\textheight}{8.25in}
\setlength{\oddsidemargin}{0in}
\setlength{\evensidemargin}{0in}


\usepackage{amsmath,graphicx,verbatim,amsfonts,amssymb,amsthm,comment,amscd,mathtools}
\usepackage{xcolor}
\usepackage{soul}
\usepackage{subcaption, enumerate}
\usepackage{graphics}
\usepackage[section]{placeins}
\usepackage{enumitem}
\usepackage{hyperref}
\hypersetup{
    colorlinks=true, 
    linktoc=all,     
    linkcolor=blue,  
    linktocpage=false,
}

\newtheorem{thm}{Theorem}[section]

\newtheorem{prop}[thm]{Proposition}
\newtheorem{lem}[thm]{Lemma}

\newtheorem{lemma}[thm]{Lemma}

\theoremstyle{remark}

\newtheorem{remark}[thm]{Remark}

\numberwithin{equation}{section}

\newcommand{\R}{\mathbb{R}}
\newcommand{\Z}{\mathbb{Z}}
\newcommand{\C}{\mathbb{C}}
\newcommand{\N}{\mathbb{N}}

\newcommand{\old}[1]{}
\newcommand{\disf}[1][F]{\circ_F}

\newcommand{\cna}{c_{\uparrow}}
\newcommand{\csa}{c_{\downarrow}}
\newcommand{\cea}{c_{\rightarrow}}
\newcommand{\cwa}{c_{\leftarrow}}

\newcommand{\ptran}[2]{p_{#1\to #2}}
\newcommand{\ctran}[2]{c_{#1\to #2}}

\newcommand{\wcr}{w_{+}}
\newcommand{\vcr}{v_{+}}
\newcommand{\scr}{S(\wcr)}

\title{The limit shape of the Leaky Abelian Sandpile Model}

\author[Ian Alevy]{Ian Alevy}
\email{ian.alevy@rochester.edu}
\address{University of Rochester\\ Rochester, NY}

\author[Sevak Mkrtchyan]{Sevak Mkrtchyan}
\email{sevak.mkrtchyan@rochester.edu}
\address{University of Rochester\\ Rochester, NY}

\date{\today}

\begin{document}

\begin{abstract}
    The leaky abelian sandpile model (Leaky-ASM) is a growth model in which $n$ grains of sand start at the origin in $\mathbb{Z}^2$ and diffuse along the vertices according to a toppling rule. A site can topple if its amount of sand is above a threshold. In each topple a site sends some sand to each neighbor and leaks a portion $1-1/d$ of its sand. 

    We compute the limit shape as a function of $d$ in the symmetric case where each topple sends an equal amount of sand to each neighbor. The limit shape converges to a circle as $d\to 1$ and a diamond as $d\to\infty$. We compute the limit shape by comparing the odometer function at a site to the probability that a killed random walk dies at that site. 
    
    When $d\to 1$ the Leaky-ASM converges to the abelian sandpile model (ASM) with a modified initial configuration. 
    We also prove the limit shape is a circle when simultaneously with $n\to\infty$ we have that $d=d_n$ converges to $1$ slower than any power of $n$. To gain information about the ASM faster convergence is necessary.
\end{abstract}

\maketitle
\tableofcontents

\section{Introduction}

The \emph{Abelian Sandpile Model} (ASM) is a cellular automaton defined on the lattice \(\Z^2\). 
The input is a \emph{sandpile configuration} \(s:\Z^2\to \N\) which represents the number of chips or grains of sand at each site $x\in\Z^2$. The sandpile $s$ evolves under the following rule: If a site \(x\) has at least 4 chips then it ``fires'' or ``topples,'' giving one chip each to each of its \(4\) nearest-neighbors (north, east, south, and west). 
The sandpile evolves until there are no more sites that can topple. The word ``Abelian'' in the name of the model refers to the fact that the final stable configuration does not depend on the order in which sites topple. The ASM model was introduced by Per Bak, Chao Tang and Kurt Wiesenfeld in 1987 \cite{BTW-ASM1987} and has been extensively studied in the decades since. A survey of the ASM and its connection to the router-router model can be found in \cite{hl2008}. See \cite{lp2010-notices}, \cite{ds2013}, and \cite{lp2017} for current overviews of the research on the ASM. A related model was studied earlier in the mathematical literature \cite{df1991}.

The ASM has been studied extensively in the case when the initial configuration consists of \(n\) chips at the origin and zero chips everywhere else. It is known that the stable configuration has a scaling limit \cite{ps2013} which is bounded between circles of radii \(c_1 \sqrt{n}\) and \(c_2 \sqrt{n}\) \cite{lp2009} (see \cite{lp2010} for a similar statement when the initial configuration has multiple point sources).
Simulations show the emergence of a fractal structure in the limit shape (see Figure \ref{fig:ASM} for a simulation of the stable configuration with \(n=10^7\) chips). It is known that the boundary of the limit shape is a Lipschitz graph \cite{as2019}. The fractal structure and the local patterns have been studied in \cite{lps2016} and \cite{ps2020}.
However many mathematical questions about the limit shape remain unanswered. 
Simulations suggest that the limit shape is convex and that the boundary has flat regions, but to date no mathematical explanation has been given for either claim. It has not even been shown that the boundary is not a circle. 

\begin{figure}
    \centering
        \includegraphics[width=9cm]{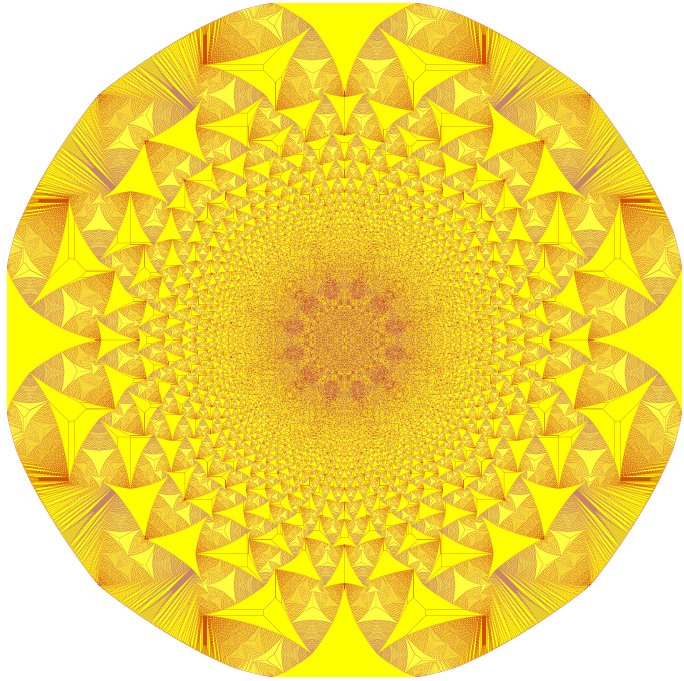}
        \caption{Standard ASM with $10^7$ chips}
        \label{fig:ASM}
\end{figure}

In this paper we obtain exact limit shape results for a one parameter deformation of the ASM, called the \emph{Leaky Abelian Sandpile Model} (Leaky-ASM), in which dissipation is present. Dissipative sandpile models were introduced in \cite{mkk1990}. We give an explicit connection between the set of visited sites for the Leaky-ASM and the death probabilities for the killed random walk. We then obtain the limit shape of the uniform Leaky-ASM by analyzing the asymptotic death probabilities of the associated killed random walk. Our model has a spectral curve similar to the spectral curve analyzed in \cite{kw2020}.

The Leaky-ASM is a generalization of the standard ASM in three key ways. First, sandpiles are allowed to take real values, \(s:\Z^2\to \R\), and are not assumed to be integer. Second, each topple may send a different number of chips in each direction. Let \(\cna, \cea, \csa, \cwa \in \R_{\geq 0}\) be non-negative real numbers. Each time a site topples \(\cna, \cea, \csa\), and \(\cwa\) chips are sent in the north, east, south, and west directions. Thirdly, each time a site topples it ``leaks'' chips. Let \(d\in \R\) with \(d>1\) and let \(c=\cna+\cea+\csa+\cwa\). A site topples whenever it has more than $cd$ chips and whenever it topples it loses \(cd\) chips, \(c\) of which are distributed to the nearest-neighbors and the remaining \(cd -c = c(d-1)\) chips leak. We call \(d-1\) the \emph{leakiness} parameter. Chips which leak disappear from the sandpile. In order for the model to be abelian it is essential that the parameters \(\cna,\cea,\csa,\cwa\) are non-negative. 
Explicitly if after \(n\) steps the sandpile is given by \(s_n(x)\) and on the \(n+1\)-st step the site \((x_1,x_2)\) topples, then the new sandpile is given by 
\begin{align*}
s_{n+1}(x_1,x_2) &= s_n (x_1,x_2) - cd\\
s_{n+1}(x_1+1,x_2)&= s_n(x_1+1,x_2) + \cea \\
s_{n+1}(x_1-1,x_2)&= s_n(x_1-1,x_2) + \cwa \\
s_{n+1}(x_1,x_2+1)&= s_n(x_1,x_2+1) + \cna \\
s_{n+1}(x_1,x_2-1)&= s_n(x_1,x_2-1) + \csa.
\end{align*}
The heights at all other sites remain unchanged.

We call the case with \(\cna=\csa=\cea=\cwa=1\) and $d>1$ the \emph{uniform Leaky-ASM}. When \(d=5/4\) a site can topple if it has at least \(5\) chips. Each time a site topples it sends one chip to each nearest-neighbor and leaks one chip. The standard ASM corresponds to the uniform Leaky-ASM with \(d=1\).

Our first main result is a limit shape theorem for the uniform Leaky-ASM with initial configuration \(s(x)=n\delta_{(0,0)}(x)\). We topple all sites until reaching the stable sandpile. A site is \emph{visited} if it topples during the stabilization process. Let \(D_{n,d}\) be the set of visited sites for the uniform Leaky-ASM with leakiness parameter \(d\) and initial configuration \(n\delta_{(0,0)}(x)\).

\begin{thm}
\label{thm:main}
Let \(d>1\) and \(r=\log n-\frac12\log\log n\). The boundary of the rescaled set of visited sites \(r^{-1}D_{n,d}\) of the uniform Leaky-ASM converges as $n\to\infty$ to the 
dual of the boundary of the gaseous phase in the amoeba of the Laurent polynomial 
\begin{equation}
\label{eq:P-uniform}
P(z,w)=\frac{4d- z -z^{-1}- w -w ^{-1}} {4(d-1)}.
\end{equation}
Moreover, if the limiting curve is scaled by $r$, then the boundary of \(D_{n,d}\) is within a constant distance from the scaled curve.
\end{thm}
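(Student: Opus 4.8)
The plan is to connect the discrete toppling dynamics to a probabilistic object—a killed random walk—and then extract asymptotics via complex-analytic/amoeba machinery. The abstract signals the key bridge: the odometer function (total sand emitted from a site during stabilization) should be comparable to the probability that a killed random walk started at the origin dies at a given site. So first I would set up the \emph{odometer function} $u_{n,d}(x)$, counting the number of times site $x$ topples, and derive the fundamental identity $s_\infty(x) = n\delta_{(0,0)}(x) + \Delta_d u_{n,d}(x)$, where $\Delta_d$ is the discrete ``leaky Laplacian'' encoding the toppling rule: each topple removes $cd=4d$ chips and redistributes $c=4$ to neighbors. Since $x$ is visited precisely when $u_{n,d}(x)>0$, the set $D_{n,d}$ is the support of the odometer, and understanding its shape reduces to understanding where $u_{n,d}$ becomes positive.

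Second, I would make the random-walk connection explicit. The operator $\Delta_d$ factors through the transition structure of a random walk on $\Z^2$ that at each step either dies (with probability $1-1/d$, matching the leaked fraction) or steps uniformly to one of the four neighbors (with probability $1/(4d)$ each). The Green's function / death-probability kernel of this killed walk is the natural inverse of $\Delta_d$, so the odometer at $x$ should, up to boundary corrections, equal $n$ times the probability $G_d(0,x)$ that the walk launched at the origin is killed at $x$ (or passes through $x$). The stabilization condition—that $x$ topples iff it receives more than the threshold—translates into the statement that $x\in D_{n,d}$ iff $n\,G_d(0,x)$ exceeds a constant of order one. Taking logarithms, the boundary is the level set where $\log n + \log G_d(0,x)$ is bounded, which is why the correct scaling is $r=\log n - \tfrac12\log\log n$: the death probability decays exponentially in $\|x\|$ with a direction-dependent rate, and the $-\tfrac12\log\log n$ captures the polynomial prefactor in the local central-limit-type estimate.

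Third—and this is the analytic heart—I would compute the exponential decay rate of $G_d(0,x)$ in each direction via a saddle-point/steepest-descent analysis of its Fourier–Laplace representation. Writing $G_d(0,x)$ as a contour integral against $1/P(z,w)$ with $P$ the Laurent polynomial in \eqref{eq:P-uniform}, the large-deviation rate in direction $\theta$ is governed by the Legendre-type transform of $-\log|P|$, and the locus where $P=0$ is exactly the spectral curve whose amoeba appears in the statement. The set of exponential decay rates, as the direction varies, traces out a convex curve that is precisely the Legendre dual (the ``dual of the boundary of the gaseous phase in the amoeba''); the gaseous-phase terminology comes from the dimer/amoeba dictionary, where the region $P\ne 0$ with a well-defined single-valued $\log$ corresponds to a gaseous phase. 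Converting the decay-rate function into the limit shape is a Legendre-duality computation, which is why the theorem phrases the answer as a \emph{dual} of an amoeba boundary.

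The main obstacle I anticipate is controlling the passage from the exact odometer to the idealized $n\,G_d(0,x)$ uniformly near the boundary, and upgrading convergence of the rescaled region to the sharp $O(1)$ bound claimed in the second sentence of the theorem. The difficulty is that $u_{n,d}$ is not literally $n\,G_d$—there are error terms from the discreteness of toppling (fractional chips accumulating below threshold) and from the unbounded domain. I would handle this by a sandwiching argument: construct explicit super- and sub-solutions to the leaky-Laplacian obstacle problem whose supports differ from the target level set by only a bounded margin, using that the death-probability kernel satisfies sharp two-sided Ney–Spitzer-type estimates $G_d(0,x)\asymp \|x\|^{-1/2}e^{-\rho_d(x/\|x\|)\|x\|}$ with explicit constants. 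Matching the $\|x\|^{-1/2}$ prefactor against the $\tfrac12\log\log n$ correction in $r$, and showing the discretization errors stay uniformly bounded as $n\to\infty$, is where the real work—and the sharp constant-distance conclusion—will concentrate.
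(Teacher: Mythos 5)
Your proposal follows essentially the same route as the paper: the odometer/leaky-Laplacian identity, inversion through the killed random walk's death probabilities, a contour-integral and steepest-descent asymptotic for those probabilities (whose $1/\sqrt{r}$ prefactor is exactly what produces the $-\frac12\log\log n$ correction), and Legendre/amoeba duality to identify the limit shape. Even the obstacle you flag as the main difficulty is resolved in the paper in the spirit you suggest, just more cleanly: Lemma \ref{lem:prob-limit-shape} exploits that $T^{-1}$ is convolution with $-\frac{d}{d-1}P_d$ (all negative coefficients) to get the exact two-sided criterion that $x\notin D_{n,d}$ when $P_d(x)<c(d-1)/n$ and $x\in D_{n,d}$ when $P_d(x)\geq cd/n$, after which the steepest-descent bounds of Lemma \ref{lem:coefficient inequalities} show these two level curves stay within a $d$-dependent constant of each other.
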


\begin{remark}
More precisely, there exists a quantity $h(d)>0$ depending on $d$ only, such that in any direction, if $n$ is large enough, the unscaled visited region is within distance $h(d)$ of the limit curve scaled up by $r=\log n-\frac 12 \log\log n$. 
\end{remark}

\begin{remark}
\label{rem:limShapeParam}
The curve giving the limit shape can be parametrized as
\begin{align}
    \label{eq:MainThmLimitShape}
    -\left( \frac{1}{S(\wcr)}, \frac{a}{S(\wcr)}  \right) \text{ for } 0\leq a \leq 1,
\end{align}
and its reflections with respect to the coordinate axes and the diagonal $y=x$, where $S(w) = S(z_+(w),w)$ with $S(z,w)=-\log z-a\log w$, $z_+(w)$ is a portion of the spectral curve $P(z(w),w)=0$ given by \eqref{eq:zpm} and $\wcr=w_+(a,d)$ is a critical point of $S(w)$ given by \eqref{eq:wcr}.
\end{remark}

The limit shape in Theorem \ref{thm:main} simplifies when the leakiness converges to the extreme values of \(0\) and \(\infty\). When $d\to\infty$ the limit shape converges to the $L_1$ unit disk, while when $d\to 0$ it converges to the unit circle. The limit curves for various values of $d$ are shown in Figure \ref{fig:4dplots} and simulations of the Leaky-ASM with the same parameters are shown in Figure \ref{fig:4dsimulations}.

\begin{figure}
    \centering
        \includegraphics[width=14cm]{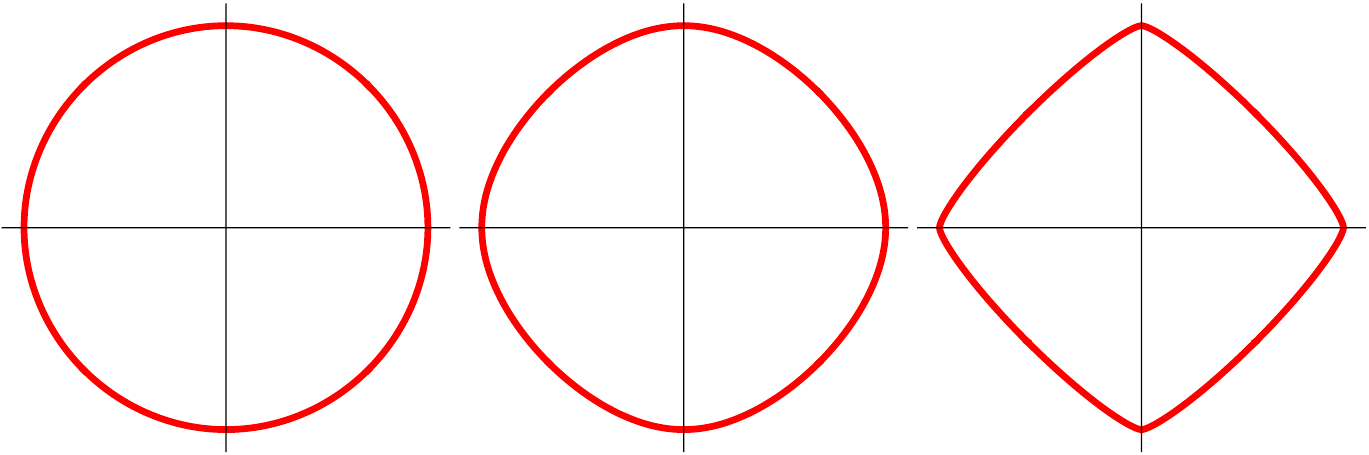}
        \caption{Limit shapes of the uniform Leaky-ASM with parameters, from left to right, $d=1.05,d=2$ and $d=100$. Each limit shape is equal (up to rescaling) to the dual of the boundary of the gaseous phase in the amoeba of \eqref{eq:P-uniform}.}
        \label{fig:4dplots}
\end{figure}

\begin{figure}
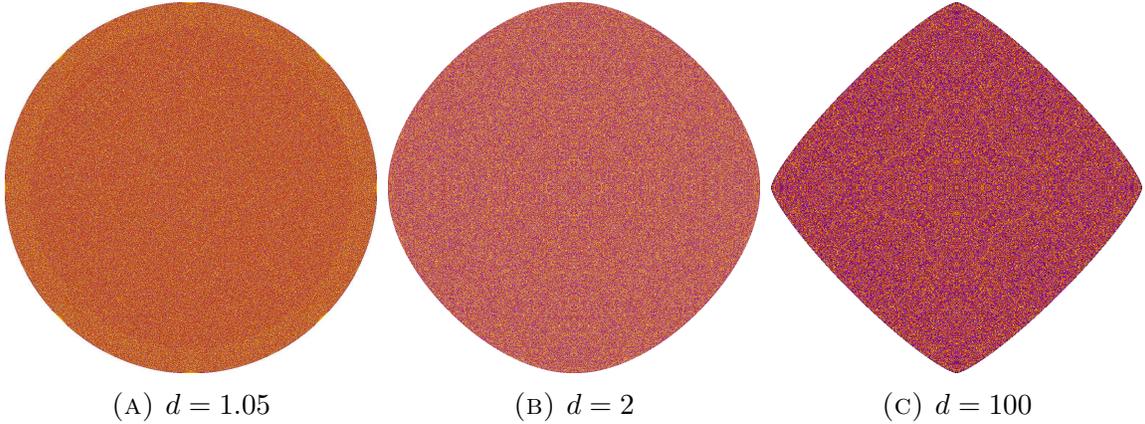

    \centering
    \begin{subfigure}[b]{0.3\textwidth}
        \includegraphics[width=\textwidth]{c10p100d105.pdf}
        \caption{\(d=1.05\)}
        \label{fig:dsmall}
    \end{subfigure}
    \begin{subfigure}[b]{0.3\textwidth}
        \includegraphics[width=\textwidth]{c10p500d2.pdf}
        \caption{\(d=2\)}
    \end{subfigure}
    \begin{subfigure}[b]{0.3\textwidth}
        \includegraphics[width=\textwidth]{c10p1000d100.pdf}
        \caption{\(d=100\)}
        \label{fig:dlarge}
    \end{subfigure}
    \caption{Simulations of the uniform Leaky-ASM with parameters, from left to right, $d=1.05$, $d=2$ and $d=100$}
    \label{fig:4dsimulations}
\end{figure}

The computations greatly simplify in the case of the north-east oriented Leaky-ASM, i.e. when \(\cna = \cea =1\), \(\csa=\cwa=0\) and $d>1$. We outline a simple argument giving the limit shape explicitly in this case.

As the leakiness \(d-1\) converges to \(0\), each topple in the Leaky-ASM converges to a topple of the standard ASM. 
In Theorem \ref{thm:leaky-to-ASM} we show that the stable configuration of the Leaky-ASM started with \(n\) chips at the origin and leakiness parameter \(d_n-1=t_n\), depending on \(n\), converges to the stable configuration of the standard ASM started with \(n\) chips at the origin and a background height of \(-1\) provided that \(t_n\) converges to \(0\) sufficiently fast as a function of \(n\). Figure \ref{fig:convergence} shows simulations of the stable configuration of the Leaky-ASM as \(t_n\) converges to \(0\). Figure \ref{fig:ASMb-1} shows the stable configuration of the standard ASM with background height \(-1\).

\begin{figure}
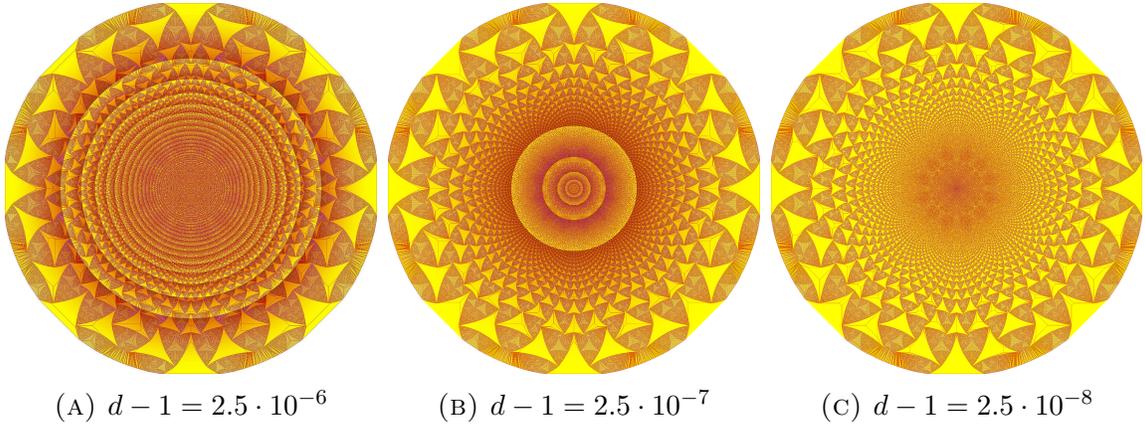

    \centering
    \begin{subfigure}[b]{0.3\textwidth}
        \includegraphics[width=\textwidth]{c10p7dm5.pdf}
        \caption{\(d-1=2.5 \cdot 10^{-6}\)}
        \label{fig:conv-5}
    \end{subfigure}
    \begin{subfigure}[b]{0.3\textwidth}
        \includegraphics[width=\textwidth]{c10p7dm6.pdf}
        \caption{\(d-1= 2.5 \cdot 10^{-7}\)}
        \label{fig:conv-6}
    \end{subfigure}
    \begin{subfigure}[b]{0.3\textwidth}
        \includegraphics[width=\textwidth]{c10p7dm7.pdf}
        \caption{\(d-1= 2.5 \cdot 10^{-8}\)}
        \label{fig:conv-7}
    \end{subfigure}
    \caption{Convergence of the Leaky-ASM to the ASM with background height $-1$ and \(n=10^7\) chips}\label{fig:convergence}
\end{figure}

As a consequence of Theorem \ref{thm:leaky-to-ASM}, it is of interest to study the limit shape of the Leaky-ASM when $t_n\to 0$ as $n\to\infty$. We prove:

\begin{thm}
\label{thm:leakTo0}
Consider the uniform Leaky-ASM with parameter $d_n=1+t_n$ started with $n$ chips at the origin. If $t_n\to 0$ as $n\to\infty$, we have the following:
\begin{itemize}
\item If $t_n\to 0$ slower than any power of $n$, then the boundary of the region \(D_{n,d}\) scaled down by $\log(n)/\sqrt{t_n}$ converges to a circle.
\item If $t_n\to 0$ as $1/n^{1-\alpha}$ with $0<\alpha<1$, then the region \(D_{n,d}\) scaled down by $\log(n)/\sqrt{t_n}$ is bounded between two circles, the ratio of whose radii converges to $\alpha$.
\end{itemize}
\end{thm}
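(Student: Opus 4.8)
The plan is to deduce both statements from the fixed-$d$ comparison between the visited region and the killed random walk that already drives Theorem \ref{thm:main}, by tracking how every relevant quantity degenerates as the mass parameter $t_n:=d_n-1$ tends to $0$. Let $L_d=dI-P$, where $P$ is the simple-random-walk operator with symbol $\tfrac12(\cos\theta_1+\cos\theta_2)$, and let $G_d=L_d^{-1}$ be its Green's function, so that the probability that the killed walk dies at $x$ equals $t_n\,G_{d_n}(0,x)$. The least-action description of the odometer $u$ supplies the two-sided control behind Theorem \ref{thm:main}: on one side the exact inequality $u(x)\le \tfrac n4 G_{d_n}(0,x)$, and on the other a sub-solution showing $u(x)>0$ once $n\,t_n\,G_{d_n}(0,x)$ exceeds an $O(1)$ threshold. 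Consequently the boundary of $D_{n,d}$ is squeezed between the level sets $\{\tfrac n4 G_{d_n}(0,x)\asymp 1\}$ and $\{n\,t_n\,G_{d_n}(0,x)\asymp 1\}$, and the whole theorem reduces to the joint asymptotics of $G_{d_n}(0,x)$ as $t_n\to0$ and $|x|\to\infty$ along the scale $|x|\sim \log n/\sqrt{t_n}$.

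First I would pin down the exponential rate. On the real locus $z=e^{\sigma_1}$, $w=e^{\sigma_2}$ the spectral curve $P(z,w)=0$ of \eqref{eq:P-uniform} reads $\cosh\sigma_1+\cosh\sigma_2=2d_n=2+2t_n$, so the convex body $K_{t_n}=\{\sigma:\cosh\sigma_1+\cosh\sigma_2\le 2+2t_n\}$, whose support function is the decay rate $\Lambda_{t_n}(x)$, shrinks — via $\cosh\sigma=1+\sigma^2/2+O(\sigma^4)$ — to the Euclidean disk of radius $2\sqrt{t_n}$ up to relative error $O(t_n)$. Hence in every direction the rate $S(\wcr)=\Lambda_{t_n}(x)/|x|$ of Remark \ref{rem:limShapeParam} equals $2\sqrt{t_n}\,(1+o(1))$ uniformly, the level sets of $G_{d_n}$ become asymptotically round, and the natural radial scale is exactly $\log n/\sqrt{t_n}$. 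This already produces the leading circular shape in both bullets and fixes the normalization.

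The separation of the two regimes then comes entirely from the $O(\log n)$ ``budget'' in the exponent. Taking logarithms in $n\,t_n\,G_{d_n}(0,x)\asymp 1$ and in $\tfrac n4 G_{d_n}(0,x)\asymp 1$, and using the vanishing-mass prefactor $G_{d_n}(0,x)\asymp t_n^{-1/4}|x|^{-1/2}e^{-\Lambda_{t_n}(x)}$ (a lattice analogue of $K_0(2\sqrt{t_n}\,|x|)$), the outer level set satisfies $2\sqrt{t_n}\,|x|=\log n-\tfrac12\log\log n+O(1)$ while the inner one satisfies $2\sqrt{t_n}\,|x|=\log n+\log t_n-\tfrac12\log\log n+O(1)$; the discrepancy is precisely the factor $t_n$ separating the death probability from the odometer, which the super-solution bound discards and the sub-solution bound retains. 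Dividing by $\log n/\sqrt{t_n}$, the outer radius tends to $\tfrac12$ and the inner radius tends to $\tfrac12\bigl(1+\log t_n/\log n\bigr)$. When $t_n\to0$ slower than any power of $n$ we have $\log t_n=o(\log n)$, both radii tend to $\tfrac12$, and $D_{n,d}$ converges to a circle; when $t_n=n^{-(1-\alpha)}$ we have $\log t_n/\log n\to-(1-\alpha)$, the inner radius tends to $\alpha/2$, and the ratio of the two radii tends to $\alpha$.

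The main obstacle is the joint asymptotic analysis of the massive lattice Green's function $G_{d_n}(0,x)$ in the coupled regime $t_n\to0$, $|x|\to\infty$ with $\sqrt{t_n}\,|x|\asymp\log n$: one needs the rate, the $t_n^{-1/4}$ and $|x|^{-1/2}$ prefactors, and their direction dependence to hold \emph{uniformly}, not merely for fixed $d$, so that the $o(1)$ errors in the rate and the $O(1)$ errors in the budget stay genuinely lower order than $\log n$. A secondary difficulty is making the super/sub-solution comparison tight near $\partial D_{n,d}$, where $u$ is of order one and the constant particular solution $-d_n/t_n$ of $L_{d_n}u=-d_n$ must be balanced against the exponentially decaying homogeneous part; it is exactly this large particular solution that produces the $\log t_n$ shift, hence the factor $\alpha$. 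Finally I would check that the $-\tfrac12\log\log n$ corrections, which force the scale $r=\log n-\tfrac12\log\log n$ in Theorem \ref{thm:main}, remain negligible against $\log n$ throughout both regimes.
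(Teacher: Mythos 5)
Your proposal follows essentially the same route as the paper: it reduces the theorem to the two level sets of the killed-walk death probability from Lemma \ref{lem:prob-limit-shape} (your sub/super-solution bounds), extracts the isotropic decay rate $2\sqrt{t_n}\,|x|$ with the $t_n^{-1/4}|x|^{-1/2}$ prefactor, and arrives at the same radii ratio $1+\log t_n/\log n$, matching \eqref{eq:PdAsymptDto1} and the paper's computation of $r_i/r_o$. The joint uniform asymptotics you flag as the main obstacle is precisely what the paper establishes in Proposition \ref{prop:asym-expansions}, by redoing the steepest-descent analysis of the contour integral \eqref{eq:PContInt} with the rescaling $\beta_n=\sqrt{\sqrt{t_n}/r_n}$ in the coupled regime $r_n\sqrt{t_n}\to\infty$.
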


\begin{figure}
    \centering
        \includegraphics[width=9cm]{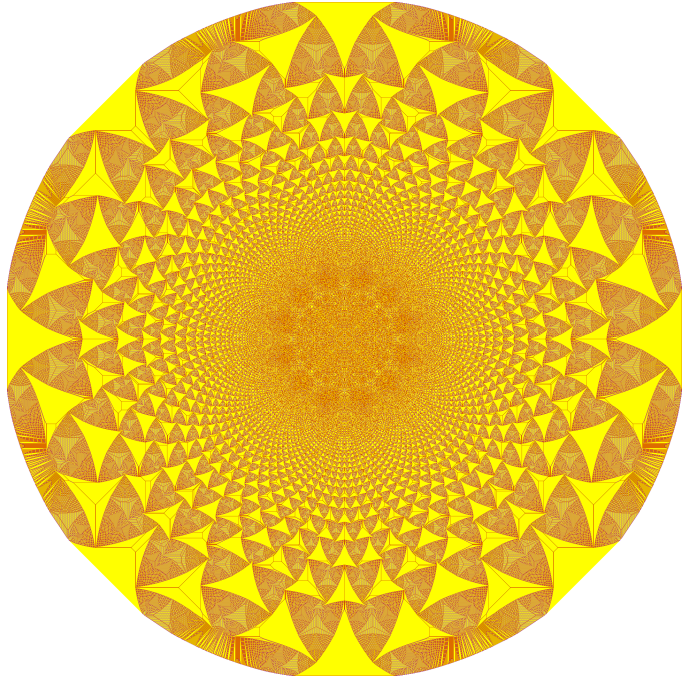}
        \caption{Standard ASM with background height $-1$ and \(10^7\) chips}
        \label{fig:ASMb-1}
\end{figure}

We obtain all of our limit shape results by relating the Leaky-ASM to the \emph{killed random walk} (KRW). In the KRW a random walker starts at the origin and at each step either gets killed with probability \(1-\frac{1}{d}\) or moves one step in either of the north, east, south, or west directions with respective probabilities $ \frac{\cna}{cd}, \frac{\cea}{cd}, \frac{\csa}{cd} $, and $\frac{\cwa}{c d}$. Let $P_d(x)$ be the probability that the walker dies at site $x$. We show:

\begin{prop}\label{prop:shape-vs-death}
    For any $x\in\mathbb{Z}^2$ we have:
\begin{enumerate}
    \item If \(P_d(x)< \dfrac{c(d-1)}{n}\) then \(x \not \in D_{n,d}\).
    \item if \(P_d(x)\geq \dfrac{c d}{n} \) then \(x\in D_{n,d}\).
\end{enumerate}
\end{prop}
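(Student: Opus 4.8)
The plan is to compare the \emph{odometer} $u(x)$—the total number of times the site $x$ topples during stabilization—with a normalized killed-random-walk Green's function, exploiting the fact that both solve the same massive discrete Laplace equation with explicit right-hand sides. Since $d>1$ forces the configuration to stabilize, $u$ is a well-defined, nonnegative, \emph{integer}-valued function of finite support, and abelianness guarantees it is independent of the toppling order. Writing $e_1=(1,0)$, $e_2=(0,1)$ and introducing the toppling operator
$$\mathcal{L}f(x)=cd\,f(x)-\cea f(x-e_1)-\cwa f(x+e_1)-\cna f(x-e_2)-\csa f(x+e_2),$$
the bookkeeping of chips sent and leaked yields the exact identity $\mathcal{L}u=n\delta_{(0,0)}-s_\infty$, where $s_\infty$ is the stable configuration. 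The only features of $s_\infty$ I will need are that it is nonnegative and that, by the toppling threshold, $0\le s_\infty\le cd$ everywhere.

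Next I would record the parallel equation for the KRW. If $G(x)$ is the expected number of visits to $x$ before being killed, a first-step decomposition gives $\mathcal{L}G=cd\,\delta_{(0,0)}$ with the \emph{same} operator $\mathcal L$, reflecting that a topple distributes chips in the same proportions as the walk takes its steps; moreover the death probability is $P_d(x)=\tfrac{d-1}{d}G(x)$. Setting $v=\tfrac{n}{cd}G=\tfrac{n}{c(d-1)}P_d$ gives $\mathcal{L}v=n\delta_{(0,0)}$, whence
$$\mathcal{L}(v-u)=s_\infty\ge 0.$$
Because $\mathcal L$ is a massive Laplacian—its diagonal entry $cd$ exceeds the total off-diagonal weight $c$ by the positive mass $c(d-1)>0$—it satisfies a maximum principle on $\mathbb Z^2$: the only decaying solution of $\mathcal Lg=0$ is $g\equiv 0$, since at a nonzero extremum the mean-value inequality fails as the neighbor weights sum to $c<cd$. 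As $u$ has finite support and $v\to 0$, the difference $v-u$ is therefore the \emph{unique} decaying solution, so $v-u=\sum_y \tilde G(\cdot,y)\,s_\infty(y)$, where $\tilde G\ge 0$ is the Green's function of $\mathcal L$.

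Both assertions now follow from this single identity. Nonnegativity of $\tilde G$ and of $s_\infty$ gives $u\le v$ pointwise; hence if $P_d(x)<c(d-1)/n$ then $v(x)<1$, which forces the integer $u(x)$ to vanish, i.e.\ $x\notin D_{n,d}$—this is part (1). For part (2), I would use $\mathcal L\mathbf 1=c(d-1)\mathbf 1$, which yields $\sum_y\tilde G(x,y)=\tfrac{1}{c(d-1)}$; combined with $s_\infty\le cd$ this gives $v(x)-u(x)\le \tfrac{d}{d-1}$, and the inequality is strict because $s_\infty$ has finite support whereas infinitely many $y$ with $\tilde G(x,y)>0$ satisfy $s_\infty(y)=0<cd$. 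Consequently $u(x)=0$ implies $v(x)<\tfrac{d}{d-1}$, i.e.\ $P_d(x)<cd/n$, and the contrapositive is exactly (2).

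The main obstacle I anticipate is not the algebra but justifying the passage from this global analytic comparison to the pointwise combinatorial conclusions: one must confirm that the pile stabilizes so that $u$ has finite support, that the two inhomogeneous equations genuinely single out $v-u$ as their unique decaying solution (this is where strict positivity of the mass $c(d-1)$, and hence the maximum principle, is essential), and that the KRW Green's function enters with precisely the constants $\tfrac{1}{cd}$ and $\tfrac{d-1}{d}$ so that the thresholds $c(d-1)/n$ and $cd/n$ emerge exactly. The remaining delicate point is the strictness needed for the boundary case $P_d(x)=cd/n$ in part (2), which rests on the finiteness of the total chip mass forcing $s_\infty\not\equiv cd$.
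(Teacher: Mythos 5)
Your proposal is correct and follows essentially the same route as the paper: both compare the odometer with $\frac{n}{c(d-1)}P_d$ through the same massive Laplacian (your $\mathcal{L}$ is $-cd\,T$ in the paper's notation), express the difference as a nonnegative inverse operator applied to the nonnegative stable configuration, and extract the two thresholds from the row-sum identity $\sum_y\tilde{G}(x,y)=\frac{1}{c(d-1)}$, which is exactly the paper's observation that the constant function $\mathbf{1}$ is an eigenvector of $T^{-1}$ with eigenvalue $-\frac{d}{d-1}$. The only real difference is presentational: you justify the inversion step via uniqueness of decaying solutions (maximum principle) and the KRW Green's function $G$ with $P_d=\frac{d-1}{d}G$, whereas the paper directly verifies $TP_d=-\frac{d-1}{d}\delta_{(0,0)}$ and exhibits $T^{-1}$ as convolution against $-\frac{d}{d-1}P_d$; your version is slightly more explicit about why that convolution operator genuinely inverts $T$ on the functions at hand.
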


\begin{remark}
 Unfortunately Theorem \ref{thm:leakTo0} does not give useful bounds for the region visited by the Leaky-ASM when $d_n\to 1$ quickly because the two level curves 
 \[P_{d_n}(x)=\dfrac{c(d_n-1)}{n} \qquad \text{and} \qquad  P_{d_n}(x)=\dfrac{c d_n}{n}\]
 of $P_{d_n}$ are too far apart in that regime.
\end{remark}

\subsection{Relation of the Leaky-ASM to the ASM with Background Height -1}
%
\begin{thm}
\label{thm:leaky-to-ASM}
As \(d\to 1\) the Leaky-ASM model converges pointwise to the ASM where each site has background height $-1$.
\end{thm}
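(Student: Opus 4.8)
The plan is to prove the statement at the level of the toppling (odometer) functions, which gives the cleanest formulation of the convergence. Working in the uniform case $\cna=\cea=\csa=\cwa=1$ (so $c=4$, each topple sends one chip to each neighbour and the threshold is $4d$), I will show that there is a threshold $d^{*}>1$ depending only on $n$ such that for all $1<d<d^{*}$ the number of times each site topples in the Leaky-ASM started from $n\delta_{0}$ coincides \emph{exactly} with the corresponding count for the standard ASM started from $n\delta_{0}$ on a background of height $-1$. Pointwise convergence as $d\to 1^{+}$ is then immediate. Write $u_{d}$ for the Leaky odometer and $\tilde u$ for the background-$(-1)$ odometer; both are nonnegative integer functions, and with the discrete Laplacian $\Delta v(x)=\sum_{y\sim x}v(y)-4v(x)$ the stable configurations are $n\delta_{0}+\Delta u_{d}-4(d-1)u_{d}$ (stable means $\le 4d$) and $\tilde s=n\delta_{0}-1+\Delta\tilde u$ (stable means $\le 3$). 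I first record finiteness: the Leaky-ASM stabilizes by dissipation, so $u_{d}$ is finite and finitely supported; and since the standard ASM from $n\delta_{0}$ stabilizes with finite odometer $u_{\mathrm{std}}$, the bound $n\delta_{0}-1+\Delta u_{\mathrm{std}}=s_{\mathrm{std}}-1\le 2\le 3$ shows $u_{\mathrm{std}}$ is an admissible stabilizing function for the background model, so by the least action principle $\tilde u\le u_{\mathrm{std}}<\infty$. In particular $M:=\max_{x}\tilde u(x)<\infty$.

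The upper bound $u_{d}\le\tilde u$ holds for \emph{every} $d>1$ and follows from the least action principle for the dissipative Leaky-ASM. Plugging $\tilde u$ into the Leaky update gives $n\delta_{0}+\Delta\tilde u-4(d-1)\tilde u=(\tilde s+1)-4(d-1)\tilde u\le 4-4(d-1)\tilde u\le 4\le 4d$, using $\tilde s\le 3$ and $\tilde u\ge 0$. Thus $\tilde u$ is an admissible stabilizing function for the Leaky-ASM, so the Leaky odometer cannot exceed it.

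For the reverse inequality I would couple the two stabilizations through a common legal toppling sequence $y_{1},\dots,y_{m}$ for the background model (finite by the previous step). Running the same sequence in both models and writing $N_{i}(x)$ for the number of topples of $x$ among the first $i$ steps, an induction on $i$ shows that the configurations differ only through the dissipation already incurred: $b_{i}(x)-a_{i}(x)=1-4(d-1)N_{i}(x)$, where $a_{i},b_{i}$ are the background and Leaky configurations after $i$ steps (at a topple the difference drops by $4d-4$ at the toppled site and is unchanged elsewhere, starting from the constant difference $1$). The key point is that each background-legal topple is also Leaky-legal once $d$ is close to $1$: if $a_{i}(y)\ge 4$ then $b_{i}(y)\ge 5-4(d-1)N_{i}(y)$, and this exceeds the Leaky threshold $4d=4+4(d-1)$ exactly when $4(d-1)(N_{i}(y)+1)<1$. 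Since $N_{i}(y)\le\tilde u(y)\le M$, the single choice $d^{*}=1+\tfrac{1}{4(M+1)}$ makes every step of the sequence legal for the Leaky-ASM. Because the Leaky-ASM is abelian, performing this legal partial stabilization yields $u_{d}\ge\tilde u$, and combined with the upper bound this gives $u_{d}=\tilde u$ for all $1<d<d^{*}$.

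I expect the main obstacle to be this reverse inequality, and precisely the need for a threshold $d^{*}$ that is uniform along the whole toppling sequence; this is exactly why the finiteness $M<\infty$ must be secured first, so that a single $d^{*}$ suffices rather than a site-dependent bound that would degenerate along a long sequence. A secondary point meriting an explicit statement is the validity of the least action principle in the dissipative setting used for the upper bound. Finally, I would translate the odometer equality into the asserted convergence of the two models, observing that the coupling identity $b_{i}-a_{i}=1-4(d-1)N_{i}\to 1$ records precisely the background-$(-1)$ offset: in the limit the two processes topple identically and their stable heights differ by the constant $1$ coming from the difference of initial backgrounds.
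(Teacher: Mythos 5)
Your proof is correct, and its engine is the same as the paper's: the paper also couples the Leaky-ASM to the background-$(-1)$ model (packaged there as a ``modified ASM'' with toppling threshold $5$, which is the background-$(-1)$ model shifted up by $1$), fires the same sites in both models, tracks the height difference --- your exact identity $b_i(x)-a_i(x)=1-4(d-1)N_i(x)$ is precisely the paper's bound $B_k(x)-4t_nm_n\le L_k(x)\le B_k(x)$ --- and uses finiteness of the maximal number of topples ($m_n$ there, your $M$) to impose $t_n<\frac{1}{4m_n}$, which is your threshold $d^*=1+\frac{1}{4(M+1)}$. The genuine difference is structural: the paper runs a single bidirectional induction showing that at every stage the same sites are legally fireable in both models, so the two stabilizations coincide step by step, whereas you split the odometer equality $u_d=\tilde u$ into an unconditional upper bound ($u_d\le\tilde u$ for all $d>1$, via the least action principle applied to $\tilde u$ in the dissipative model) plus a lower bound from the one-directional coupling. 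Your decomposition is slightly cleaner and isolates exactly where the smallness of $d-1$ is needed, at the cost of invoking the least action principle and the abelian property for the dissipative toppling rule --- both are standard, and the usual proofs go through verbatim because toppling removes $4d$ chips from a site and only adds chips to its neighbours, but the paper's induction avoids having to state them. Finally, your closing observation that the Leaky heights converge to $\tilde s+1$ rather than to $\tilde s$ is exactly what the paper's proof also yields ($L_{k_f}\to B_{k_f}=\tilde s+1$); this constant offset of $1$ coming from the difference in backgrounds is glossed over in the theorem's phrasing and in the remark following it, and your formulation has the virtue of making it explicit.
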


\begin{remark}
We can formulate this more precisely as follows. Given an initial sandpile configuration \(h(x)\), let \(s^h_{\infty,d}\) denote the stable sandpile configuration with leakiness \(d\). The configuration \(s^h_{\infty,1}(x)\) corresponds to the standard ASM without dissipation. The statement of the theorem is equivalent to 
\begin{align*}
    \lim_{d\to 1} s^{n\delta_{(0,0)}}_{\infty,d}(x)=s^{n\delta_{(0,0)}-1}_{\infty,1}(x) \text{ for all } x \in \Z^2.
\end{align*}

\end{remark}

\begin{proof}
We establish this by coupling two sandpiles. Consider the following slightly modified version of the ASM: a site topples if it has at least $5$ chips. When it topples, it sends $1$ chip to each neighbor. Start this model with $n$ chips at the origin and no chips anywhere else and let $m_n$ be the maximum number of times any site topples until no sites can topple. Let $B_k:\mathbb{Z}^2 \to \Z_{\geq 0}$ be the height of the configuration after $k$ firings. 

On the other hand, consider the Leaky-ASM started with a stack of $n$ chips at the origin and leakiness $d_n-1=t_n$ depending on $n$. Suppose $t_n<\frac{1}{4m_n}$. Let $L_k:\mathbb{Z}^2\to\mathbb{R}_{\geq 0}$ be the height of the configuration after $k$ firings. 

Of course the functions $B_k$ and $L_k$ depend on the order in which the sites topple. We use a simple induction argument to show that we can couple the two models by always firing the same sites. Suppose the first $k$ firings have been done at identical locations. Fix any site $x$. Since every neighbor of $x$ has toppled the same amount of times in both models and in both models each time a neighbor toppled, $x$ received $1$ chip, the number of chips $x$ received from all neighbors up to time $k$ would be the same. On the other hand every time $x$ toppled in the leaky model it lost $4d_n=4(1+t_n)$ chips and in the non-leaky case $4$ chips. Since $x$ topples at most $m_n$ times in the non-leaky version, we have
\begin{align*}
B_k(x)-1<B_k(x)-4t_nm_n\leq L_k(x)<B_k(x).
\end{align*}
This implies that, since the threshold for firing the Leaky-ASM is $4(1+t_n)$ and for the modified ASM is $5$, site $x$ can topple in either both or neither of the models.

It follows that both models will reach their final configuration at the same time $k_f$ and we will have $B_{k_f}(x)=\lceil L_{k_f}(x)\rceil$ for every site $x$. 

Finally, notice that in the modified ASM whenever a site topples, it leaves at least $1$ chip behind, so $1\leq B_{k_f}(x)<5$. Subtracting $1$ from each site is equivalent to starting the model with height $n-1$ at the origin and a well of depth $1$ everywhere else and running the standard ASM.

\end{proof}

\subsection{Outline} The paper is organized as follows. In Section \ref{sec:KilledWalk} we introduce the killed random walk (KRW) and relate death probabilities of the walk to coefficients in a Laurent expansion. In Section \ref{sec:WalkVsSandpiles} we show the connection between the Leaky-ASM and the KRW by connecting the values of the odometer function of the Leaky-ASM to the death probabilities of the KRW.
In the short Section \ref{sec:2dShape} we illustrate simple calculations giving the limit shape in the case of the north-east oriented Leaky-ASM. Section \ref{sec:4dshape} contains the proof of the main theorem. In the proof we express the death probabilities of the KRW as a contour integral and compute its asymptotics using the steepest descent method. This gives us the explicit parametrization of the limit shape given in Remark \ref{rem:limShapeParam}. In Section \ref{sec:amoebae} we discuss the connection of the limit shape to the dual of the amoeba of the associated spectral curve. Section \ref{sec:LeakTo0} contains the steepest descent analysis in the proof of the second main result.

\section{The Killed Random Walk}
\label{sec:KilledWalk}
In this section we relate the death probabilities of the killed random walk to the coefficients in the Laurent series expansion of a rational function. Let $X_1,X_2,\dots$ be a sequence of independent identically distributed random variables with common distribution
\begin{align*}
\begin{gathered}
        P\{X_j = (1,0)\}  = \frac{\cea}{cd}, \qquad P\{X_j = (-1,0)\}  = \frac{\cwa}{cd}, \\
        P\{X_j = (0,1)\}  = \frac{\cna}{cd}, \qquad P\{X_j = (0,-1)\}  =\frac{\csa}{c d}, \\
        P\{X_j = (0,0)\}  = 1-\frac{c}{cd}=1-\frac 1d.
\end{gathered}
\end{align*}
For notation of transition probabilities we write 
\[\ptran{x}{y} = P(X_j =y-x).\]

We will interpret $X_j=(0,0)$ to mean that the walk is killed, so we define $$K_n:=\prod_{i=1}^n 1_{X_i\neq (0,0)}$$ to be the indicator whether the walk has been killed by time $n$ or not. $K_n=1$ means that the walker is still alive after the $n$-th step. Formally, by the \emph{killed random walk} (KRW) started at \(x\in \Z^2\) we mean the sequence $S_n$ of random variables defined by
\begin{equation*}
S_n =  x + K_1 X_1 + \cdots + K_n X_{n}.
\end{equation*}
For the KRW started at the origin let 
\[P_d(x) = \text{ Probability KRW dies at } x=P(S_{\min\{i:K_i=0\}}=x).\]
We assume \(d>1\) and define the Laurent polynomial
\begin{equation}
\label{eq:P}
P(z,w) = \frac{cd- \left( \cna z + \csa z^{-1}+ \cea w + \cwa w ^{-1}\right) } {c(d-1)}.
\end{equation}
We connect the death probabilities to coefficients of monomials in a Laurent expansion of $P^{-1}(z,w)$. 

\begin{lem}
\label{lem:PdCoeffP}
We have
\[[ P^{-1} (z,w) ]_{ij}=P_d(i,j),\]
where the left-hand side is the coefficient of $z^iw^j$ in the Laurent series expansion of $P^{-1}$ in the region 
\begin{equation}
\label{eq:expansion-region}
  \frac{\cna |z| + \csa |z^{-1}|+\cea |w| + \cwa |w^{-1}|}{cd} <1.
\end{equation}

\end{lem}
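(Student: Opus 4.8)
The plan is to recognize $P_d(x)$ as a Green's-function-type quantity for the killed walk and to match it term-by-term against the Laurent coefficients of $P^{-1}$. First I would decompose the death probability according to the number of steps the walker survives. Conditioning on the walk being alive for exactly $k$ moves before dying, the probability of dying at $(i,j)$ is the probability that the walker is still alive and located at $(i,j)$ after $k$ genuine moves, times the per-step death probability $1-\tfrac1d$. Summing over $k\geq 0$ gives
\begin{align*}
P_d(i,j)=\Bigl(1-\tfrac1d\Bigr)\sum_{k\geq 0} P\bigl(\text{walker at }(i,j)\text{ after }k\text{ surviving steps}\bigr).
\end{align*}
Because each surviving step occurs with total probability $\tfrac1d$ and, conditioned on surviving, moves north/east/south/west with probabilities proportional to $\cna,\cea,\csa,\cwa$, the single-step ``alive-and-moves'' generating kernel is exactly $\tfrac1{cd}(\cna z+\csa z^{-1}+\cea w+\cwa w^{-1})$. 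Here I am using that $z^iw^j$ bookkeeps a net displacement of $(i,j)$, and I would double check the matching of $z$ with the first coordinate and $w$ with the second against the step distribution given before the lemma.

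Next I would assemble the sum into a geometric series. Let $Q(z,w)=\tfrac1{cd}(\cna z+\csa z^{-1}+\cea w+\cwa w^{-1})$ be the step kernel. The probability of being alive at $(i,j)$ after exactly $k$ steps is the coefficient of $z^iw^j$ in $Q(z,w)^k$, so
\begin{align*}
\sum_{i,j}P_d(i,j)\,z^iw^j=\Bigl(1-\tfrac1d\Bigr)\sum_{k\geq 0}Q(z,w)^k=\frac{1-\tfrac1d}{1-Q(z,w)}.
\end{align*}
A direct computation then shows $\dfrac{1-\tfrac1d}{1-Q(z,w)}=P(z,w)^{-1}$ with $P$ as in \eqref{eq:P}: multiplying numerator and denominator by $cd$ turns the denominator into $cd-(\cna z+\csa z^{-1}+\cea w+\cwa w^{-1})$ and the numerator into $cd-c=c(d-1)$, which is precisely $P^{-1}$. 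Matching coefficients of $z^iw^j$ on both sides yields $[P^{-1}]_{ij}=P_d(i,j)$, the claimed identity.

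The main obstacle is not the algebra but justifying the convergence and the identification of the geometric series with the correct Laurent expansion. The interchange of summation over $k$ with extraction of the coefficient of $z^iw^j$, and the claim that $\sum_k Q^k$ equals $(1-Q)^{-1}$ as a convergent Laurent series, both require $|Q(z,w)|<1$, which is exactly the hypothesis \eqref{eq:expansion-region} that $\tfrac1{cd}(\cna|z|+\csa|z^{-1}|+\cea|w|+\cwa|w^{-1}|)<1$. I would argue that on this region the series $\sum_k Q(z,w)^k$ converges absolutely (dominated by the geometric series in $|Q|$ bounded by the left side of \eqref{eq:expansion-region}), so the formal manipulation is valid and the expansion is the one specified in the lemma. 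I would also note that the probabilistic interpretation of $[Q^k]_{ij}$ as the alive-at-$(i,j)$-after-$k$-steps probability is legitimate because the kernel $Q$ has nonnegative coefficients summing to $\tfrac1d<1$, so no measure-theoretic subtlety arises; the only care needed is fixing the annular region of convergence so that the series expansion is unambiguous, since a Laurent series of a rational function depends on the chosen region.
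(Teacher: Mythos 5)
Your proof is correct and takes essentially the same approach as the paper's: both decompose the death probability according to the number $k$ of surviving steps, identify the $k$-step probabilities with the coefficients of the $k$-th power of the step kernel $\tfrac{1}{cd}\left(\cna z+\csa z^{-1}+\cea w+\cwa w^{-1}\right)$, and sum the resulting geometric series, using absolute convergence in the region \eqref{eq:expansion-region} to justify interchanging the sums. The only difference is the direction of presentation --- you assemble the generating function of $P_d$ and show it equals $P^{-1}$, while the paper expands $P^{-1}$ and reads off the probabilistic meaning of its coefficients --- which is a cosmetic distinction.
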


\begin{proof}

First, we expand \(P^{-1}(z,w)\) as a Laurent series 
\begin{align*}
P^{-1}(z,w) & = \frac{c(d-1)}{ cd-\left( \cna z + \csa z^{-1}+ \cea w + \cwa w ^{-1}\right)} \\
& =\frac{c(d-1)}{c d} \frac{1}{ 1-\left( \cna z + \csa z^{-1}+ \cea w + \cwa  w^{-1}\right)/(cd)} \\
& = \frac{d-1}{d}\sum_{k=0}^{\infty}  (cd)^{-k} \left( \cna z + \csa z^{-1}+ \cea w + \cwa w^{-1} \right) ^k
\end{align*} 
which converges in the region \eqref{eq:expansion-region}.

A path which dies at site \((i,j)\) must pass through the site then die at the next step. For a site \((i,j)\in \Z^2\) let \(\Gamma_k(i,j)\) be the set of paths from \((0,0)\) to \((i,j)\) with length \(k\). For a path \(\gamma\in \Gamma_k\) suppose that it takes \(n_{\cna}\) steps up, \(n_{\csa}\) steps down, \(n_{\cea}\) steps right and \(n_{\cwa}\) steps left. Let the weight of a path \(w(\gamma)\) be the product of the weights along the path, i.e., \(w(\gamma) = \cna^{n_{\cna}} \csa^{n_{\csa}} \cea^{n_{\cea}} \cwa^{n_{\cwa}}\). Since
\begin{align*}
\left( \cna z + \csa z^{-1}+ \cea w + \cwa w^{-1} \right) ^k & = \sum_{(i,j)\in \Z^2} \sum_{\gamma_k \in \Gamma_k(i,j)} w(\gamma_k) z^i w^j ,
\end{align*}
we get
\begin{align*}
P^{-1}(z,w)
& =  \frac{d-1}{d}\sum_{k=0}^{\infty} (cd)^{-k}  \sum_{(i,j)\in \Z^2} \sum_{\gamma_k \in \Gamma_k(i,j)} w(\gamma_k) z^i w^j \\
& =  \sum_{(i,j)\in \Z^2} \left( \sum_{k=0}^{\infty} \frac{d-1}{d} (cd)^{-k} \sum_{\gamma_k \in \Gamma_k(i,j)} w(\gamma_k)\right) z^i w^j,
\end{align*}
where we could exchange the order of summation since by \eqref{eq:expansion-region} the series above converges absolutely.

The coefficient \( \displaystyle\frac{d-1}{d} (cd)^{-k} \sum_{\gamma_k \in \Gamma_k(i,j)} w(\gamma_k)\) is the probability that the walker dies at site \((i,j)\) on the \((k+1)\)-st step. Summing over all \(k\) we find that the coefficient of \(z^i w^j\) is the probability that the walker dies at site \((i,j)\).
\end{proof}

\section{Connection Between The Killed Random Walk and Leaky Sandpiles}
\label{sec:WalkVsSandpiles}
In this section we relate the death probabilities of the killed random walk to the region visited by the Leaky-ASM when started with a single large stack of chips at the origin. 

We focus on the case in which \(n\) chips start at the origin \((0,0)\). This initial configuration corresponds to the point mass \(n \delta_{(0,0)}(x)\). Let \(D_n\) be the set of sites which are visited with this initial configuration. 

A very useful tool to study sandpile models is the \emph{odometer function} introduced in \cite{d2006}. It is defined as
\[u(x): = \text{total mass emitted from \(x\).}\]
Note, that in our interpretation $u(x)$ includes both the mass sent to the neighbors and the mass leaked.

Define the operator $T$ by
\begin{align}
\label{eq:T}
T u(x): & =  \left( \sum_{y \sim x} \frac{\ctran{y}{x}}{cd}u(y) \right)- u(x) \\
\nonumber
& = \text{mass received by } x -  \text{mass emitted by } x,
\end{align}
where $y\sim x$ means the site $y$ is a neighbor of the site $x$ and $\ctran{y}{x}$ stands for the number of chips $x$ receives when $y$ topples (e.g. if $x$ is to the east of $y$, then $\ctran{y}{x}=\cea$), so 
$\frac{\ctran{y}{x}}{cd}$ is the portion of the chips emitted from $y$ that are sent to $x$. 

\begin{remark}
Note, that we can express the operator $T$ in terms of the weighted Laplacian \(\Delta u(x) = \sum_{y \sim x} e_{y\to x} u(y) - u(x)\), where $e_{y\to x}$ would be the weight of the edge between $x$ and $y$ with $\sum_{x:y\sim x}e_{y\to x}=1$. We have 
\begin{equation*}
T = \frac{1}{d} \Delta  -\left(\frac{d-1}{d}  \right) I.
\end{equation*}
In the case of the ASM we have $d=1$ so the identity term disappears and we get $T=\Delta$ with $\Delta$ being the standard Laplacian since all the edge weights are $1/4$ in the ASM. 
\end{remark}

We start the Leaky-ASM with \(n\) chips at the origin and run it until it stabilizes to a final configuration \(f(x)\). The odometer satisfies the equation
\begin{align}
\label{eq:Tu}
T u(x) = f(x) - n \delta_{(0,0)}(x).
\end{align}

Note, however, that $\frac{\ctran{y}{x}}{cd}=\ptran{y}{x}$, which gives a probabilistic interpretation to the operator $T$ given in the next lemma.

\begin{lem}\label{lem:Tinverse}
Applying the operator $T$ to the death probabilities $P_d$ we obtain
\begin{align}
\label{eq:TPd}
 T P_d(x) = - \frac{d-1}{d} \delta_{(0,0)} (x).
 \end{align}
\end{lem}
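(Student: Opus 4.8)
The plan is to derive a one-step renewal equation for the death probabilities $P_d$ straight from the definition of the KRW and then observe that this equation is merely a rearrangement of \eqref{eq:TPd}; the identity $\frac{\ctran{y}{x}}{cd}=\ptran{y}{x}$ noted just above is exactly what makes the two match. First I would condition on the first step of the walk started at the origin. With probability $1-\frac1d$ the event $\{X_1=(0,0)\}$ occurs, the walker is killed before moving, and its death site is the origin, contributing the term $\bigl(1-\frac1d\bigr)\delta_{(0,0)}(x)$. Otherwise the walker moves to a neighbor $e$ of the origin with probability $\ptran{0}{e}=P(X_1=e)$, and by the strong Markov property together with the translation invariance of the step law the remainder of the trajectory is a fresh KRW started at $e$, which dies at $x$ with probability $P_d(x-e)$. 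Summing over the four nearest-neighbor steps $e\in\{(\pm1,0),(0,\pm1)\}$ gives
\begin{align}
\label{eq:renewal-Pd}
P_d(x)=\Bigl(1-\tfrac1d\Bigr)\delta_{(0,0)}(x)+\sum_{e}\ptran{0}{e}\,P_d(x-e).
\end{align}

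The second step is to identify the convolution in \eqref{eq:renewal-Pd} with the mass-received term of $T$. Reindexing by $y=x-e$, so that $y$ ranges over the neighbors of $x$ as $e$ ranges over the step vectors, and using $\ptran{0}{e}=P(X_1=x-y)=\ptran{y}{x}$, the sum becomes $\sum_{y\sim x}\ptran{y}{x}P_d(y)$. Substituting this into \eqref{eq:renewal-Pd} and moving $P_d(x)$ to the other side yields
\begin{align*}
\sum_{y\sim x}\ptran{y}{x}\,P_d(y)-P_d(x)=-\Bigl(1-\tfrac1d\Bigr)\delta_{(0,0)}(x),
\end{align*}
whose left-hand side is precisely $TP_d(x)$ by the definition \eqref{eq:T} of $T$. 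Since $1-\frac1d=\frac{d-1}{d}$, this is the claimed identity \eqref{eq:TPd}.

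The one point that needs genuine care is the reindexing: I must check that the kernel $\ptran{y}{x}$ multiplying $P_d(y)$ inside $T$ agrees with the forward kernel $\ptran{0}{e}$ of the first-step decomposition with no reflection of the directions, since the weights $\cna,\csa,\cea,\cwa$ are in general distinct and a transpose mismatch here would be easy to miss. I would confirm this on a degenerate case such as the purely eastward walk. As an independent and essentially computation-free check, one can bypass probability altogether using Lemma \ref{lem:PdCoeffP}: the operator $T$ acts on generating functions as multiplication by $-\frac{d-1}{d}P(z,w)$, so applying it to the series $\sum_{i,j}P_d(i,j)z^iw^j=P^{-1}(z,w)$ multiplies $P^{-1}$ by $-\frac{d-1}{d}P$, which collapses to the constant $-\frac{d-1}{d}$, i.e.\ to $-\frac{d-1}{d}\delta_{(0,0)}$.
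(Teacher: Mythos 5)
Your proof is correct, and it takes a genuinely different route from the paper's. The paper argues from the death end of the trajectory: it writes $P_d(x)=\sum_{k}P_d^k(x)$, conditions a path dying at $x$ on its penultimate site $y\sim x$, and sums over the path length $k$ to obtain $P_d(x)=\sum_{y\sim x}\ptran{y}{x}P_d(y)$ for $x\neq(0,0)$, with the extra $\frac{d-1}{d}$ appearing only at the origin, where death in a single step is possible. You argue from the origin end: you condition on the first step and use the Markov property together with translation invariance of the step law to restart the walk at the neighbor $e$, so that the remaining trajectory dies at $x$ with probability $P_d(x-e)$. The two decompositions produce the same kernel---the coefficient of $P_d(y)$ is $P(X_1=x-y)=\ptran{y}{x}$ in both---precisely because the walk is spatially homogeneous; your worry about a transpose mismatch is the right one to have, since the weights are asymmetric in general, and your resolution (including the degenerate eastward-walk check) is correct. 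The trade-off: your forward renewal is shorter and dispenses with the $P_d^k$ bookkeeping and the sum over path lengths, but it genuinely uses translation invariance, which the paper's backward decomposition never invokes (it uses only the one-step transition probabilities into $x$ and the constancy of the killing rate, so it would adapt more readily to spatially inhomogeneous step distributions). Your closing generating-function argument is also sound and arguably the cleanest of all: $T$ acts on $\sum_{i,j}P_d(i,j)z^iw^j$ as multiplication by $\frac{\cna z+\csa z^{-1}+\cea w+\cwa w^{-1}}{cd}-1=-\frac{d-1}{d}P(z,w)$, so Lemma \ref{lem:PdCoeffP} collapses the generating function of $TP_d$ to the constant $-\frac{d-1}{d}$; to make this airtight one should note that absolute convergence in the region \eqref{eq:expansion-region} justifies applying $T$ term by term, and that Laurent coefficients in that region are unique.
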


\begin{proof}
Suppose the position of the walker after \(k\) steps is \(S_k\).  
Let \(P_d^k(x)\) be the probability that the walker dies after \(k\) steps at site \(x\). We have \(P_d(x)= \sum_{k=1}^{\infty} P_d^k(x)\). Note that a step consists of either moving to a neighboring cell or dying.

Assume \(x\neq (0,0)\). If \(x\neq (0,0)\) then the walker cannot die at $x$ in fewer than \(2\) steps since it will take at least $1$ step to get to $x$ and another to die there, and so we have
\begin{align*}
P_d(x) &=\sum_{k=2}^{\infty} P(S_{k-1}= x,K_{k-1}=1) \frac{d-1}{d}\\
& = \sum_{k=2}^{\infty} \sum_{y \sim x} P(S_{k-2}=y,K_{k-2}=1) P(S_{k-1}=x | S_{k-2}=y,K_{k-2}=1) \frac{d-1}{d} \\
& =  \sum_{k=2}^{\infty} \sum_{y \sim x} P(S_{k-2}=y,K_{k-2}=1) \ptran{y}{x} \frac{d-1}{d} \\
& =  \sum_{k=2}^{\infty} \sum_{y \sim x} P_d^{k-2}(y)  \ptran{y}{x}   =  \sum_{y \sim x} \ptran{y}{x} \sum_{k=2}^{\infty} P_d^{k-2}(y)     =  \sum_{y \sim x} \ptran{y}{x}P_d(y)   \\
&= T P_d(x) + P_d(x).
\end{align*}
It follows that \(T P_d(x)=0\) for \(x\neq (0,0)\).

If \(x=(0,0)\) then the walker can die in 1 step with probability \(\frac{d-1}{d}\) and so using the above computation we have 
\begin{align*}
P_d((0,0)) &=\frac{d-1}{d} + \sum_{k=2}^{\infty} P(S_{k-1}= x,K_{k-1}=1) \frac{d-1}{d} \\
& = \frac{d-1}{d}+ T P_d(0,0) + P_d(0,0),
\end{align*}
which implies \(TP_d(0,0) = -\frac{d-1}{d}\).
\end{proof}

We can use the fact that the odometer and the death probabilities satisfy similar equations to relate them to each other.

\begin{lem}\label{lem:prob-limit-shape}
For any $x\in\mathbb{Z}$ we have:
\begin{enumerate}
\item If \(P_d(x)< \dfrac{c(d-1)}{n}\), then \(u(x)=0\).
\item If \(P_d(x)\geq \dfrac{c d}{n} \), then \(u(x)\geq cd\).
\end{enumerate}
\end{lem}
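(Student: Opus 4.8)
The plan is to exploit the fact that $u$ and $P_d$ solve the same operator equation up to the source term and an overall scale, and then to run a discrete maximum principle for the substochastic operator hidden inside $T$. Write $T = B - I$, where $Bu(x) = \sum_{y\sim x}\frac{\ctran{y}{x}}{cd}u(y)$ is the averaging part. The key structural fact is that every row sum of $B$ equals $\sum_{y\sim x}\frac{\ctran{y}{x}}{cd} = \frac{c}{cd} = \frac1d < 1$, so $B$ is strictly substochastic; this is where $d>1$ enters. I would then rescale the death probabilities by setting $g := \frac{nd}{d-1}P_d$. By Lemma \ref{lem:Tinverse} this gives $Tg = -n\delta_{(0,0)}$, which matches the source term of the odometer equation \eqref{eq:Tu}, namely $Tu = f - n\delta_{(0,0)}$, where $f$ is the stable configuration satisfying $0 \le f \le cd$.

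For part (1) I would compare $u$ with $g$ from above. Set $h = u-g$; subtracting the equations gives $Th = f \ge 0$, i.e. $Bh \ge h$ pointwise, so $h$ is subharmonic for $B$. Both $u$ and $g$ tend to $0$ at infinity: $u$ because each topple leaks mass $c(d-1)>0$, so only finitely many topples occur and $u$ has finite support, and $g$ because $P_d$ is a summable probability distribution. Hence $h$ is bounded and vanishes at infinity. If $M:=\sup_x h(x)>0$ it is attained at some $x_0$, and there $M = h(x_0)\le Bh(x_0)\le \frac1d M < M$, a contradiction; thus $\sup h \le 0$ and $u \le g$ everywhere. Now if $P_d(x) < \frac{c(d-1)}{n}$ then $u(x)\le g(x) < cd$. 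Since the odometer only takes values in $cd\,\Z_{\ge 0}$ (each topple emits exactly $cd$), $u(x)<cd$ forces $u(x)=0$.

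For part (2) I would bound $u$ from below by controlling $v := g-u \ge 0$. Subtracting the equations the other way gives $(I-B)v = f$. Because the $\ell^\infty$ operator norm of $B$ equals $1/d<1$, the operator $I-B$ is invertible on bounded functions via the Neumann series, and $v = \sum_{k\ge 0}B^k f$ is its unique bounded solution. Using $0\le f \le cd$ and the row-sum identity $\sum_y (B^k)(x,y) = d^{-k}$, I obtain $v(x) = \sum_{k\ge 0}(B^k f)(x) \le cd\sum_{k\ge 0}d^{-k} = \frac{cd^2}{d-1}$, and the inequality is strict because from $x$ one can reach, in finitely many $B$-steps, sites outside the finite visited region where $f=0<cd$. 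Hence if $P_d(x)\ge \frac{cd}{n}$ then $g(x)\ge \frac{cd^2}{d-1} > v(x)$, giving $u(x) = g(x)-v(x) > 0$; appealing again to $u(x)\in cd\,\Z_{\ge 0}$ upgrades this to $u(x)\ge cd$.

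The constant-matching algebra is immediate once the scaling $g=\frac{nd}{d-1}P_d$ is in place. I expect the points requiring genuine care to be the analytic hygiene of the maximum principle — confirming that $f$ is bounded with $0\le f\le cd$ and that $h$ decays at infinity so the supremum is attained — and the strictness of the bound $v(x)<\frac{cd^2}{d-1}$, which is precisely what converts $u(x)>0$ into the quantitative conclusion $u(x)\ge cd$.
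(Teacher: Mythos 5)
Your proof is correct, and it ultimately establishes the very same two-sided comparison as the paper --- $u \leq \frac{nd}{d-1}P_d$ pointwise and $\frac{nd}{d-1}P_d - \frac{cd^2}{d-1} < u$ --- before finishing with the identical quantization step $u(x)\in cd\,\Z_{\geq 0}$. The difference lies in how the inverse of $T$ is exploited. The paper applies the explicit inverse $(T^{-1}\varphi)(y)=-\frac{d}{d-1}\sum_x P_d(y-x)\varphi(x)$, obtained from Lemma \ref{lem:Tinverse} and translation invariance, to the sandwich $0\le T\left(\frac{d-1}{dn}u-P_d\right)<\frac{c(d-1)}{n}$, using that this kernel is negative and that constants are eigenvectors of $T^{-1}$ with eigenvalue $-\frac{d}{d-1}$; both bounds then fall out of a single application. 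You never write down $T^{-1}$: your upper bound comes from a discrete maximum principle for the substochastic part $B=T+I$ (legitimate because $u$ has finite support and $P_d$ vanishes at infinity, so the supremum of $u-g$ is attained), and your lower bound from the Neumann series $v=\sum_{k\ge 0}B^k f$ together with the row-sum identity $\sum_y (B^k)(x,y)=d^{-k}$ --- which in effect reconstructs $-T^{-1}$ and proves its positivity rather than citing it. What your route buys is self-containedness: it makes explicit several points the paper's proof leaves implicit, namely attainment of the supremum, uniqueness of bounded solutions of $(I-B)v=f$, and the strictness of $v(x)<\frac{cd^2}{d-1}$, which you correctly secure via reachability of sites where $f$ vanishes and which is genuinely needed to upgrade $u(x)\ge 0$ to $u(x)>0$. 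What the paper's route buys is brevity: given Lemma \ref{lem:Tinverse}, one application of $T^{-1}$ yields the clean sandwich $\frac{dn}{d-1}\left(P_d(x)-\frac{cd}{n}\right)<u(x)\le\frac{dn}{d-1}P_d(x)$ with no case analysis.
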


\begin{proof}
    Combining \eqref{eq:Tu} and \eqref{eq:TPd} and using the fact that \(T: L^1(V)\to L^1(V)\) is a linear operator we have 
    \begin{align*}
    T \left( \frac{d-1}{dn } u(x) - P_d(x) \right)& = \frac{d-1}{dn } f(x) .
    \end{align*}
    Since \(f(x)\) is the stabilized configuration, no site can topple, so we have \(0\leq f(x)< cd \), which implies
    \begin{align}
    \label{eq:T-Pd}
    0 \leq T \left( \frac{d-1}{dn } u(x) - P_d(x) \right) < \frac{c(d-1)}{n}.
    \end{align}
    Lemma \ref{lem:Tinverse} shows that \(T\) is invertible with inverse
    \begin{align*}
        (T^{-1}f)(y) = -\frac{d}{d-1} \sum_{x\in V} P_d(y-x) f(x).
    \end{align*}
    Moreover, the constant function $1$ is an eigenvector of $T$ with eigenvalue $-\frac{d-1}{d}$. Thus it is also an eigenvector of $T^{-1}$ with eigenvalue $-\frac{d}{d-1}$. Applying $T^{-1}$ to \eqref{eq:T-Pd} and using this fact along with the fact that \(T^{-1}\) has all negative coefficients, we obtain
\begin{align*}
0 &\geq\frac{d-1}{dn } u(x) - P_d(x) > - \frac{c(d-1)}{n} \frac{d}{d-1} 
\end{align*} 
which gives
\begin{align*}
    \frac{dn}{d-1} \left( P_d(x)- \frac{cd}{n}\right)&< u(x) \leq \frac{dn}{d-1} P_d(x).   
\end{align*}

If \(P_d(x) \geq \frac{cd}{n}\) then \(u(x)>0\). On the other hand, every time a site topples, it emits mass $cd$, so if a site has emitted mass then it must have emitted mass at least \(cd\) so in fact \(u(x) \geq cd\). If \(\frac{dn}{d-1} P_d(x)<cd\) or, equivalently, \(P_d(x)< \frac{c(d-1)}{n}\) then \(u(x)<cd\) which implies that $x$ has not toppled at all and so \(u(x)=0\).

\end{proof}

\begin{remark}
 Proposition \ref{prop:shape-vs-death} follows immediately from Lemma \ref{lem:prob-limit-shape}
\end{remark}

\section{The Limit Shape of the 2-Directional Leaky-ASM}
\label{sec:2dShape}
In this section we show how just using Stirling's approximation the limit shape can be computed in the much simpler case of the 2-directional Leaky-ASM, where whenever a site topples, it sends equal number of chips only to its northern and eastern neighbors and leaks some. In our notation this corresponds to the case $\cna=\cea=1$, $\csa=\cwa=0$ and $d>1$.

In this case we have 
\begin{align*}
P^{-1}(z,w) & = \frac{d-1}{d} \sum_{k=0}^{\infty} (2d)^{-k} (z+w)^k,
\end{align*}
which implies
\begin{align*}
[P^{-1}(z,w)]_{i j} = \frac{d-1}{d} \binom{i+j}{i} (2d)^{-(i+j)}.
\end{align*}
Let \(i=r\) and \(j=r a\) with $a\geq 0$. By symmetry, it is enough to compute the shape of the visited region in the region $0<x,0<y<x$, so we can assume \(0\leq a \leq 1\). By Lemma \ref{lem:PdCoeffP} we have
\begin{align*}
P_d(r,ar) &= \frac{d-1}{d} \binom{r(1+a)}{r} (2d)^{-r(1+a)}.
\end{align*}
Applying Stirling's approximation, \(m! \sim \sqrt{2\pi m} (m/e)^m\) with \(m=(1+a)r\) gives that when $r$ is large, we have
\begin{align*}
P_d(r,ar)  &\sim 
\frac{d-1}{d} \frac{1}{\sqrt{2\pi r}}\sqrt{\frac{1+a}{a}}\left( \frac{1 }{a^a }\left( \frac{1+a}{2d} \right)^{a+1} \right)^{r}.
\end{align*}
By Lemma \ref{lem:prob-limit-shape} to compute the limit shape we need to find $r_1,r_2$ such that $$P_d(r_1,ar_1) \sim \frac{cd}{n}\text{ and }P_d(r_2,ar_2) \sim \frac{c(d-1)}{n}$$ as \(n\to \infty\). We get $r_1,r_2=-\frac{\log{n}}{g}(1+o(1))$ with $r_2-r_1=-\frac{\log\frac{d-1}d}{g}+o(1)$, where 
\[g(a) =\log \left( \frac{1 }{a^a } \left(\frac{1+a}{2d} \right)^{1+a} \right).\]

Thus, if we scale down by $\log(n)$ the region visited by the 2-directional Leaky-ASM started with $n$ chips at the origin, we get the parametric curve consisting of \((x,y) = (-1/g(a),-a/g(a))\) for \(0\leq a \leq 1\) and its reflection about the line \(y=x\).

Graphs of the limit shape curves for $d=1.05,d=2$ and $d=10^5$ are shown in Figure \ref{fig:2dplots}. The shapes in the plots are scaled down not by $\log(n)$ but by $\log(n)/\log(d)$ so that they are all of the same scale. Simulations of the sandpile with those same parameters are shown in Figure \ref{fig:2dsimulations}. 

The limit shape is in fact a portion of the dual of the amoeba of the corresponding spectral curve. See Section \ref{sec:amoebae} for details.

\begin{remark}
\label{rem:limitTriangle}
Note, that the limit shape converges to a triangle when $d\to\infty$. A heuristic explanation of this is that when $d$ is large, every time a site topples, lots of chips leak, so it is much harder to get an extra step further, so the furthest sites reached all have the same $L_1$ distance from the origin. 
\end{remark}

\begin{figure}
    \centering
        \includegraphics[width=12cm]{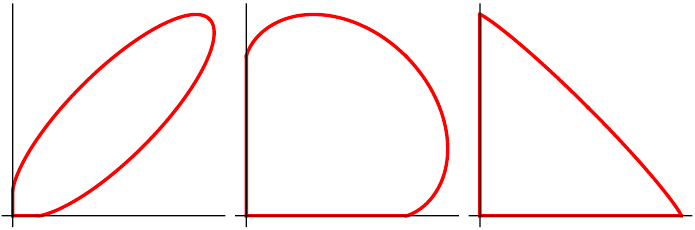}
        \caption{Limit shapes of the 2-directional Leaky-ASM with parameters, from left to right, $d=1.05,d=2$ and $d=10^5$. Each limit shape is (up to rescaling) a certain portion of the dual of the amoeba of \(P(z,w)=\frac{2d-z-w}{2(d-1)}\).}
        \label{fig:2dplots}
\end{figure}

\begin{figure}
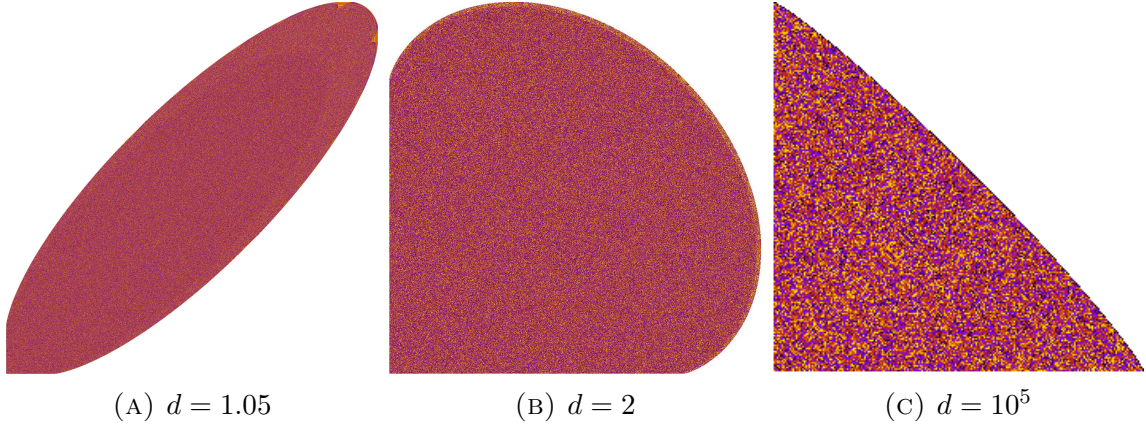

    \centering
    \begin{subfigure}[b]{0.3\textwidth}
        \includegraphics[width=\textwidth]{c10p100d105ne}
        \caption{\(d=1.05\)}
    \end{subfigure}
    \begin{subfigure}[b]{0.3\textwidth}
        \includegraphics[width=\textwidth]{c10p500d2ne}
        \caption{\(d=2\)}
    \end{subfigure}
    \begin{subfigure}[b]{0.3\textwidth}
        \includegraphics[width=\textwidth]{c10p1000d10p5ne.pdf}
        \caption{\(d=10^5\)}
    \end{subfigure}
    \caption{Simulations of the 2-directional Leaky-ASM with parameters, from left to right, \(d=1.05\), \(d=2\) and \(d=100\).}
    \label{fig:2dsimulations}
\end{figure}

\section{The Limit Shape of the Leaky-ASM}
\label{sec:4dshape}
\subsection{A Contour Integral Representation of the Death Probabilities}
In this section we obtain the limit shape of the Leaky-ASM with parameter $d>1$. We have $\cna=\cea=\csa=\cwa=1$.

By Lemma \ref{lem:prob-limit-shape} we should determine the points $x\in\mathbb{Z}^2$ such that $P_d(x)$ is of order $\frac{1}{n}$. When $n\to\infty$, the $x$'s for which $P_d(x)$ if order $\frac{1}{n}$ will have $\|x\|_2\to\infty$. By Lemma \ref{lem:PdCoeffP}, instead, we can compute coefficients in the Laurent expansion of $P^{-1}(z,w)$, which we will do by writing the coefficients as contour integrals and using the method of steepest descent. 

First, recall, that we assume \eqref{eq:expansion-region} which in this case becomes
\[ |z| +  |z^{-1}|+ |w| +  |w^{-1}| <4d.\]
The equation \(z P(z,w)=0\) is quadratic in \(z\) and therefore we can write 
\begin{align*}
P(z,w) =- \frac{(z- z_+(w))(z-z_-(w))}{c(d-1)z}
\end{align*}
where the roots satisfy \(z_+ z_-=1\), \(z_++z_- = 4d- w-1/w\), and are ordered with \(|z_-|<1<|z_+|\). Explicitly we have 
\begin{align}
\label{eq:zpm}
z_{\pm}(w) & = \frac{(4d-w-1/w) \pm\sqrt{(4d-w-1/w)^2-4} }{2}.
\end{align}
Note, that, to simplify expressions, we will often write $z_\pm$ for $z_\pm(w)$. In cases where this will cause ambiguities we will give the argument explicitly.

\begin{lem}
If \(|w|=1\) then \(z_-,z_+\in \R_{+}\). In particular \(0<z_-< 1< z_+\).
\end{lem}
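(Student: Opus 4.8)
The plan is to parametrize the unit circle and reduce the whole question to a single real quadratic with positive discriminant. First I would write $w = e^{i\theta}$, so that $w + w^{-1} = 2\cos\theta$ is real and confined to $[-2,2]$. Setting $b := 4d - w - w^{-1} = 4d - 2\cos\theta$, this quantity is therefore real, and since $d>1$ and $\cos\theta \le 1$ we get $b \ge 4d - 2 > 2$. The point of this first step is that on $|w|=1$ the trace $z_+ + z_- = b$ is not just real but bounded below away from $2$.

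Next I would note that $z_\pm$ are exactly the two roots of the real quadratic $z^2 - bz + 1 = 0$, consistent with the stated relations $z_+ + z_- = b$ and $z_+ z_- = 1$. Its discriminant is $b^2 - 4$, which is strictly positive precisely because $b > 2$. Hence $\sqrt{b^2-4}$ is real, and formula \eqref{eq:zpm} produces two genuine real numbers $z_\pm = (b \pm \sqrt{b^2-4})/2$. This already gives $z_-, z_+ \in \R$.

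It then remains to pin down the signs and the ordering. Since $b>0$ and $b^2 - 4 < b^2$, we have $0 < \sqrt{b^2-4} < b$, so the smaller root satisfies $z_- = (b - \sqrt{b^2-4})/2 > 0$, and a fortiori $z_+ > 0$; thus $z_\pm \in \R_+$. For the strict interlacing, I would use the product relation: from $z_- z_+ = 1$ together with $z_- < z_+$ (the $+$ root being the larger), squaring gives $z_-^2 < z_- z_+ = 1 < z_+^2$, whence $0 < z_- < 1 < z_+$. This simultaneously verifies the ordering convention $|z_-| < 1 < |z_+|$ used earlier.

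I do not anticipate a genuine obstacle here; this is essentially the observation that a real monic quadratic with constant term $1$ and trace exceeding $2$ has two distinct positive reciprocal roots straddling $1$. The only point requiring care is to invoke $d>1$ \emph{strictly}, since that is what forces $b>2$ and hence keeps the discriminant strictly positive — guaranteeing the roots are distinct and different from the degenerate case $z_- = z_+ = 1$ that would occur at $b=2$.
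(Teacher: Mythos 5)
Your proof is correct and follows essentially the same route as the paper: both observe that on $|w|=1$ the quantity $w+w^{-1}$ is real and at most $2$ (you via $w=e^{i\theta}$, the paper via $1/w=\bar w$), so $4d-w-w^{-1}>2$ and the discriminant under the square root in \eqref{eq:zpm} is strictly positive, forcing $z_\pm$ to be real and positive. The only difference is that you spell out the final interlacing $0<z_-<1<z_+$ from the reciprocal relation $z_-z_+=1$, a step the paper leaves implicit given its earlier ordering convention $|z_-|<1<|z_+|$.
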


\begin{proof}
If $|w|=1$, then $\frac 1w=\bar{w}$, where the bar stands for complex conjugation. Thus $w+\frac 1w=2\Re(w)$. Since $|w|=1$, we have $\Re(w)\leq 1$ so $$4d-w-\frac 1w\geq 4d-2\Re(w)\geq 4d-2>2.$$ 
It follows that the expression under the square root in \eqref{eq:zpm} is positive so $z_\pm$ are real and positive.
\end{proof}

First, we extract the coefficient of $z^j$ for $j\in\mathbb{Z}$.
\begin{lem}
\label{lem:coeffz}
For $j\in\mathbb{Z}$ the coefficient of \(z^j\) is given by 
\begin{equation*}
[z^j]P^{-1}(z,w) =\displaystyle \frac{c(d-1)z_+^{-|j|}}{ z_+-z_+^{-1}}.
\end{equation*}
\end{lem}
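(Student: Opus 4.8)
The plan is to read off the coefficient directly from the factorization already recorded above. Since
\begin{align*}
P(z,w) = -\frac{(z-z_+)(z-z_-)}{c(d-1)z},
\qquad\text{we have}\qquad
P^{-1}(z,w) = -\frac{c(d-1)\,z}{(z-z_+)(z-z_-)},
\end{align*}
and the task reduces to extracting $[z^j]$ of this rational function, expanded in the correct annulus. First I would fix $w$ on the unit circle and pin down that annulus. By the preceding lemma, $0<z_-<1<z_+$ when $|w|=1$, and the circle $|z|=1$ lies in the convergence region \eqref{eq:expansion-region} (which for $|z|=|w|=1$ reads $4<4d$, valid since $d>1$). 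Hence the Laurent series of Lemma \ref{lem:PdCoeffP}, restricted to this slice, is the one convergent on $z_-<|z|<z_+$, i.e.\ the contour separates the two poles with $z_-$ inside and $z_+$ outside.

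Next I would perform a partial fraction decomposition,
\begin{align*}
P^{-1}(z,w) = -\frac{c(d-1)}{z_+-z_-}\left(\frac{z_+}{z-z_+} - \frac{z_-}{z-z_-}\right),
\end{align*}
and expand each term on the appropriate side of the annulus: since $|z|<z_+$ I write $\frac{z_+}{z-z_+}=-\sum_{k\ge 0}(z/z_+)^k$, and since $|z|>z_-$ I write $\frac{z_-}{z-z_-}=\sum_{m\ge 1}(z_-/z)^m$. Collecting the two pieces yields
\begin{align*}
P^{-1}(z,w) = \frac{c(d-1)}{z_+-z_-}\left(\sum_{j\ge 0} z_+^{-j}\,z^{j} + \sum_{m\ge 1} z_-^{m}\,z^{-m}\right),
\end{align*}
so that $[z^j]P^{-1}=\frac{c(d-1)}{z_+-z_-}z_+^{-j}$ for $j\ge 0$ and $[z^j]P^{-1}=\frac{c(d-1)}{z_+-z_-}z_-^{|j|}$ for $j<0$. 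Finally I would invoke $z_+z_-=1$, so that $z_-=z_+^{-1}$ and $z_+-z_-=z_+-z_+^{-1}$; in both the nonnegative and negative cases the surviving power of $z_+$ collapses to $z_+^{-|j|}$, giving the claimed formula uniformly in $j\in\Z$.

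The hard part is not any computation but the bookkeeping of the annulus: one must expand the $z_-$-pole in positive powers of $1/z$ and the $z_+$-pole in positive powers of $z$, and this choice is precisely what makes the answer agree with the probability coefficients of Lemma \ref{lem:PdCoeffP} rather than with some other Laurent branch. Establishing that $|z|=1$ genuinely lies in the convergence region and separates $z_-$ from $z_+$ is therefore the step that deserves care. Equivalently, one could write $[z^j]P^{-1}=\frac{1}{2\pi i}\oint_{|z|=1}P^{-1}(z,w)\,z^{-j-1}\,\mathrm{d}z$ and evaluate by residues inside $|z|=1$ (at $z=z_-$, together with a pole at $z=0$ when $j>0$); I prefer the partial-fraction route since it sidesteps the origin and produces both sign ranges of $j$ at once.
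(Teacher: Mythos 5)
Your proof is correct and follows essentially the same route as the paper's: factor $P^{-1}$ via $z_\pm$, perform a partial fraction decomposition, expand in the annulus $z_-<|z|<z_+$, and collapse both ranges of $j$ using $z_+z_-=1$. The only differences are cosmetic (you absorb the factor $z$ into the partial fractions, whereas the paper keeps it outside, and you justify the choice of annulus slightly more explicitly).
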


\begin{proof}
First, split $P^{-1}$ as  
\begin{align*}
P^{-1}(z,w) & = \frac{-z c(d-1)}{ (z-z_-)(z-z_+)} = \frac{-z c(d-1)}{ z_--z_+} \left( \frac{1}{ z- z_-} - \frac{1}{z-z_+} \right).
\end{align*}
We are interested in the expansion in a neighborhood of \((z,w)=(1,1)\) so we may assume \(z_-< |z|< z_+\). We have
\begin{align*}
P^{-1}(z,w) & = \frac{-zc(d-1)}{ z_- - z_+} \left( \frac{1}{ z-z_-} - \frac{1}{z-z_+} \right)\\
& = \frac{-zc(d-1)}{ z_- -z_+} \left( \frac{1}{z} \sum_{k=0}^{\infty} \left ( \frac{z_-}{z} \right)^k + \frac{1}{z_+}\sum_{k=0}^{\infty} \left ( \frac{z}{z_+} \right)^k \right)\\
& = \frac{c(d-1)}{z_+ - z_+^{-1}}\left( \sum_{k=0}^{\infty} \left ( \frac{1}{z z_+} \right)^k   +\sum_{k=1}^{\infty} \left ( \frac{z}{z_+} \right)^k \right)\\
& = \frac{c(d-1)}{z_+ - z_+^{-1}} \sum_{k=-\infty}^{\infty}  z^k z_+^{-|k|}
,
\end{align*}
where we used that \(z_-=1/z_+\).
\end{proof}

\begin{remark}\label{rmk:line-prob}
This gives a rather simple formula for the probability of dying on the vertical line \(L_j:=\{(j,k)\in \Z^2 : k \in \Z \})\), since 
\begin{align*}
\sum_{k\in \Z} P_d(j,k) = [z^j]P^{-1}(z,1) = \frac{c(d-1) z_+^{-|j|}(1)}{ (z_+(1)-z_+^{-1}(1))} = \frac{c(d-1) z_+(1)^{1- |j|}}{z_+^2(1)-1},
\end{align*}
where \(z_+(1)=2d -1 + \sqrt{(2d-1)^2-1}\). 
\end{remark}

Since we have $\cna=\cea=\csa=\cwa=1$, the region reached by the Leaky-ASM will have $8$-fold symmetry. It will be symmetric with respect to the two axes and the lines $y=\pm x$. Thus it is enough to understand the values of \(P_d(x)  \) when $x=(r,ar)$ with $r>0$ and \(a\in [0,1]\). We will need to find $r$ such that $P_d(r,ar)$ is of order $1/n$ as $n\to\infty$. Such an $r$ will need converge to infinity, so next we will compute the asymptotics of $P_d(r,ar)$ when $r\to\infty$.

By Lemmas \ref{lem:PdCoeffP} and \ref{lem:coeffz} we have 
\begin{align*}
P_d(r, ar)& = [w^{ra}]\displaystyle \frac{c(d-1) z_+^{-|r|}}{ z_+ - z_+^{-1}} \\
&  =  \frac{c(d-1)}{2 \pi i} \oint_{C} \frac{z_+^{-r}}{ z_+- z_+^{-1}} \frac{dw}{w^{ra+1}} \\
& = \frac{c(d-1)}{ 2\pi i} \oint _C \frac{1}{( z_+ - z_+^{-1}) w} e^{-r \log (z_+ w^a)} dw,
\end{align*}
where $C$ is the counter-clockwise contour $|w|=1$. 
From \eqref{eq:zpm} we have $$z_+ -1/z_+ =\sqrt{(4d-w-1/w)^2-4} $$ and so we can define
\begin{equation}
\label{eq:G}
G(w) =\frac{1}{ ( z_+- z_+^{-1}) w }= \frac{1}{w \sqrt{(4d-w-1/w)^2-4}}
\end{equation}

and 
\begin{align}
\label{eq:Sdeff}
S(w) &=- \log (z_+ w^a) 
\\\label{eq:Sinden}& = \log \left(\frac{2 w^{-a}}{4d-w-\frac{1}{w}+\sqrt{\left(4d-w-\frac{1}{w}\right)^2-4}}\right)\\
\label{eq:Sinnum}&=\log \left(\frac{4d-w-\frac{1}{w}-\sqrt{\left(4d-w-\frac{1}{w}\right)^2-4}}{2 w^a} \right).
\end{align}
In the newly defined notation we get
\begin{align}
\label{eq:PContInt}
P_d(r,ar) & = \frac{c(d-1)}{2 \pi i} \oint_C G(w) e^{ r S(w)} dw.
\end{align} 

\subsection{Asymptotics of the Death Probabilities}

\subsubsection{Critical points of the exponent}
To apply the steepest descent method we need to understand the critical points of $S(w)$. 

\begin{lem}\label{lem:real-roots}
The function \(S(w)\) has two real critical points when \(0<a\leq 1\) and \(d >1\). If $w_-$ is the smaller one and $w_+$ the larger one, then \[-1< w_-<0 <1<w_+\]
and $S(w_+)\in\mathbb{R}$.

\end{lem}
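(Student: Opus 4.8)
The plan is to locate the zeros of $S'(w)$ directly. Writing $Q(w):=4d-w-w^{-1}$, so that $z_++z_+^{-1}=Q$ and $z_+-z_+^{-1}=\sqrt{Q^2-4}$ (from \eqref{eq:zpm}), I would first differentiate $z_++z_+^{-1}=Q$ to get $z_+'(1-z_+^{-2})=Q'$, hence $\frac{z_+'}{z_+}=\frac{Q'}{z_+-z_+^{-1}}=\frac{Q'}{\sqrt{Q^2-4}}$. Differentiating \eqref{eq:Sdeff} then gives
\[ S'(w) = -\frac{Q'(w)}{\sqrt{Q(w)^2-4}} - \frac{a}{w}, \qquad Q'(w)=-1+w^{-2}. \]
Setting $S'(w)=0$ and clearing denominators turns the critical point equation into $w-w^{-1}=a\sqrt{Q^2-4}$. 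Since the right-hand side is $\geq 0$, any real critical point must satisfy $w-w^{-1}\geq 0$, i.e. it must lie in $(1,\infty)$ or in $(-1,0)$. This already isolates the two candidate intervals named in the statement and shows a root cannot occur in $(0,1)$ or $(-\infty,-1)$.

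Next I would square and substitute $p=w+w^{-1}$, using $w^2+w^{-2}=p^2-2$ and $Q=4d-p$, reducing everything to the single quadratic
\[ (1-a^2)\,p^2 + 8a^2 d\,p - 4\left(4a^2 d^2 - a^2 + 1\right) = 0. \]
For $0<a<1$ the leading coefficient is positive, so this is an upward parabola $\Phi(p)$. I would then evaluate the two diagnostics $\Phi(2)=16a^2 d(1-d)<0$ and $\Phi(-2)=-16a^2 d(1+d)<0$, which force one root $p_1>2$ and one root $p_2<-2$. Because $w\mapsto w+w^{-1}$ maps $(1,\infty)$ bijectively onto $(2,\infty)$ and $(-1,0)$ bijectively onto $(-\infty,-2)$, the root $p_1$ yields a unique $w_+\in(1,\infty)$ and $p_2$ a unique $w_-\in(-1,0)$; the companion preimages (in $(0,1)$ and in $(-\infty,-1)$) violate $w-w^{-1}\geq 0$ and are discarded as spurious solutions introduced by squaring. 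This produces exactly two real critical points with the ordering $-1<w_-<0<1<w_+$.

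For the reality of $S(w_+)$ I would additionally verify that $z_+(w_+)$ is a genuine real root larger than $1$, which needs $Q(w_+)>2$, i.e. $p_1<4d-2$. This follows from the further evaluation $\Phi(4d-2)=16d(d-1)>0$, which together with $\Phi(2)<0$ confines $p_1$ to $(2,4d-2)$, so $Q(w_+)=4d-p_1\in(2,4d-2)$. With $w_+>0$ and $z_+(w_+)>1$ both real and positive, $S(w_+)=-\log z_+(w_+)-a\log w_+\in\R$.

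The main obstacle is the endpoint $a=1$. There the leading coefficient $1-a^2$ vanishes, the quadratic degenerates to the linear equation $p=2d$, and its solutions $w=d\pm\sqrt{d^2-1}$ are both positive; only $w_+=d+\sqrt{d^2-1}$ survives the sign condition, while the would-be negative critical point has escaped, in the sense that $p_2\to-\infty$ and $w_-\to 0^-$ as $a\uparrow 1$. Thus the clean two-critical-point picture is really the statement for $0<a<1$, and the diagonal case $a=1$ must be handled as a limit in which the negative critical point is absorbed into the singularity of $S$ at the origin. I expect reconciling this degeneration with the uniform claim for $0<a\leq 1$ to be the delicate part; the rest is the elementary sign analysis of $\Phi$ above.
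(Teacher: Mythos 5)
Your proof is correct where it is decisive (namely for $0<a<1$), and it takes a genuinely different route from the paper's. Both arguments start from the same critical-point equation \eqref{eq:Speqn} and square it, but the paper then solves the resulting quartic in $w$ explicitly, discards the two spurious roots by checking the sign of $w-1/w$ on the closed-form expressions, and obtains the bounds $w_+>1$, $-1<w_-<0$ by a fairly messy differentiation showing that the formulas \eqref{eq:wcr} are increasing in $a$. You instead pass to $p=w+w^{-1}$, which collapses the quartic to the single quadratic $\Phi(p)$, and get everything from sign evaluations: $\Phi(\pm2)<0$ places the roots at $p_1>2$ and $p_2<-2$, and $\Phi(4d-2)=16d(d-1)>0$ pins $p_1\in(2,4d-2)$, which is exactly what makes $z_+(w_+)$ real and greater than $1$, hence $S(w_+)\in\R$; your interval-wise filtering of the companion preimages by the condition $w-w^{-1}\geq 0$ plays the same role as the paper's sign check on explicit roots. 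Your route is more elementary and also quantitatively useful: evaluating $\Phi(2d)=4(d^2-1)(1-a^2)\geq 0$ gives $p_1\leq 2d$, i.e. $w_+\leq d+\sqrt{d^2-1}$, recovering the bounds \eqref{eq:wcrBdd} needed in Lemma \ref{lem:coefficient inequalities} with no monotonicity argument at all. What the paper's heavier computation buys is the closed form \eqref{eq:wcr} itself, which is reused later (e.g. in the expansion \eqref{eq:wcrSeries} of Section \ref{sec:LeakTo0}); it is not needed for this lemma.

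Concerning the endpoint $a=1$: you are right to be suspicious, but the issue is a defect of the lemma as stated, not of your argument, and it requires no delicate reconciliation. At $a=1$ your quadratic degenerates to $p=2d$, the only real critical point is $w_+=d+\sqrt{d^2-1}$, and the negative critical point has merged into the singularity at the origin (indeed $p_2\to-\infty$, $w_-\to 0^-$ as $a\uparrow 1$), so the ``two real critical points'' claim is genuinely false at $a=1$. The paper's own proof has the identical degeneration but passes over it silently: the formulas \eqref{eq:wcr} and the $a$-derivatives all carry $1-a^2$ in denominators and are of indeterminate form $0/0$ at $a=1$. The flaw is harmless because nothing downstream ever uses $w_-$: the steepest-descent analysis and Lemma \ref{lem:coefficient inequalities} use only the existence of $w_+$, the bound $1\leq w_+\leq d+\sqrt{d^2-1}$, and $S(w_+)\in\R$, and your linear-equation analysis establishes all of these at $a=1$, with $w_+(1)=d+\sqrt{d^2-1}$ matching the value the paper quotes. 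So the clean fix is to assert the two-critical-point statement for $0<a<1$ and record separately that the assertions about $w_+$ hold for all $0<a\leq 1$; your proposal already contains complete proofs of both.
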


\begin{proof}
From \eqref{eq:Sdeff} we have
\begin{align}
\label{eq:Sp}
S'(w) & = - \left( \frac{z_+'}{z_+} + \frac{a}{w} \right),
\end{align}
which implies critical points occur when 
\begin{align}
\label{eq:Speqn}
a &=- \frac{w z_+'}{z_+} = \frac{w-\frac{1}{w}}{\sqrt{\left(4d-w-\frac{1}{w}\right)^2-4}}. 
\end{align}

Squaring this equation we get a degree $4$ polynomial equation for $w$ which can, of course, be solved explicitly, giving roots
\begin{align*}
\frac{-2a^2 d+ u_\pm\pm2 a \sqrt{d^2(1+a^2)- du_\pm}}{1-a^2},
\end{align*}
where 
\begin{equation}
\label{eq:u}
u_\pm=\pm\sqrt{4 a^2d^2 +(1-a^2)^2},
\end{equation}
which are necessarily real. We have 
\begin{equation}
\label{eq:ineqForU}
\left(d^2(1+a^2) \right)^2-d^2u_\pm^2=d^2(d^2-1)(1-a^2)^2>0,
\end{equation}
 so all four roots are real. Computing $w-1/w$ we get $\pm4 a \sqrt{d^2(1+a^2)- du_\pm}/(1-a^2)$ which, together with \eqref{eq:Speqn}, implies that only the two roots that have the plus sign in front of the square root in this expression are actual critical points of $S(w)$. They are
\begin{align}
\label{eq:wcr}
w_\pm& =\frac{-2a^2 d+ u_ \pm + 2a \sqrt{d^2(1+a^2)- du_ \pm}}{1-a^2}.
\end{align}

Differentiating $w_+$ with respect to $a$ and simplifying we obtain
\begin{align*}
\frac{dw_+}{da} 
&=\frac{4 ad (d(1+a^2)-u_+)+\sqrt{d \left(d(1+a^2)-u_+\right)} \left(2 u_+-4 a^2
   d\right)}{\left(1-a^2\right)^2 u_+}.
\end{align*}
From the definition of $u_+$ it is clear that $u_+\geq 2ad\geq 2a^2d$ when $0\leq a\leq 1$. On the other hand \eqref{eq:ineqForU} implies that $d(1+a)^2\geq u_+$. These two inequalities together imply that $w_+$ is an increasing function of $a$. Since $w_+=1$ when $a=0$ we get that $w_+>1$ for all $a>0$. An identical computation shows that \(\frac{d}{da} w_->0\). Since \(w_-=-1\) when \(a=0\), this implies \(-1<w_-<0\).

For the last claim, first note that 
$$(2d)^2-u_+^2=(1-a^2)(a^2+4d^2-1)>0$$
so $2d>u_+$. It follows that 
\begin{equation}
\label{eq:4d-wcr}
4d-w_+-\frac1{w_+}=\frac{2(2d-u_+)}{1-a^2}>0.
\end{equation}
Moreover, from \eqref{eq:Speqn} and from $w_+\in\mathbb{R}$  we see that $(4d-w_+-1/w_+)^2-4>0$. Thus the expression inside the logarithm in \eqref{eq:Sinden} is a positive real when $w=w_+$. 
\end{proof}

\subsubsection{Deformation of contours}
\label{subsubsec:defCont}
 
To compute the asymptotics of the contour integral \eqref{eq:PContInt} as $r\to\infty$, we will deform the contour of integration to go through the critical point $\wcr$ and in the vicinity of $\wcr$ match the steepest descent contour 
\[\tilde{C}=\{ z\in \C : \Im(S(z)) = \Im(S(\wcr))=0 \text{ and } \Re(S(z)) < \Re(S(\wcr)) \}.\]
The function $S(w)$ has singularities and branch cuts, and we next show that in the process of deforming the contour we do not cross any singularities or branch cuts. For the remainder of this section, we restrict to values of \(r\) and \(a\) so that both \(r\) and \(ar\) are positive integers.

First, a simple lemma. 
\begin{lem}\label{lem:sqroot-neg}
For \(d>1\) the equation \((4d-w-1/w)^2-4=0\) has 4 solutions, \(w_1<w_2<w_3<w_4\), where
\begin{align*}
w_1& =  2d +1 - 2 \sqrt{d(d+1)} \qquad & w_2 =  2d-1 - 2 \sqrt{d(d-1)}\\
w_3& = 2d-1 + 2 \sqrt{d(d-1)} \qquad &w_4 =  2d+1 + 2 \sqrt{d(d+1)}
\end{align*}
with \(w_1>0\), \(w_2<1\), $w_3>1$ and \(w_4> 3 + 2 \sqrt{2}\).
\end{lem}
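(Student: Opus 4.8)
The plan is to factor the quartic by the observation that $(4d-w-1/w)^2-4=0$ is equivalent to $4d-w-\tfrac1w=\pm 2$, i.e. (clearing the denominator in $w+\tfrac1w=4d\mp 2$, noting $w\ne 0$) to one of the two quadratics $w^2-(4d-2)w+1=0$ and $w^2-(4d+2)w+1=0$. First I would solve each by the quadratic formula, simplifying the discriminants via $(2d\pm1)^2-1=4d(d\pm1)$, so that $(4d-2)^2-4=4\big((2d-1)^2-1\big)=16\,d(d-1)$ and $(4d+2)^2-4=16\,d(d+1)$; both are positive for $d>1$, hence all four roots are real and distinct (and nonzero). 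This yields $2d-1\pm 2\sqrt{d(d-1)}$ and $2d+1\pm 2\sqrt{d(d+1)}$, which I match to the statement: $w_2,w_3$ come from the minus/plus of the first pair and $w_1,w_4$ from the minus/plus of the second. Since each quadratic has constant term $1$, its two roots are reciprocals, so $w_1w_4=1$ and $w_2w_3=1$.

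Next I would verify the sign and unit-comparison claims, each reducing to a one-line inequality after isolating the square root and squaring (both sides being positive in the relevant range). For $w_1>0$: $2d+1>2\sqrt{d(d+1)}\iff(2d+1)^2>4d(d+1)\iff 1>0$. For $w_2<1$: $2d-2<2\sqrt{d(d-1)}\iff(d-1)^2<d(d-1)\iff d-1<d$, valid since $d>1$ gives $d-1>0$. The claims $w_4>0$ and $w_3>1$ are then immediate from positivity of the square roots (or from $w_1w_4=1,\ w_2w_3=1$ together with $0<w_1$ and $0<w_2<1$).

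For the ordering it suffices to prove $w_1<w_2$, since $w_2<1<w_3$ and $w_4-w_3=2+2\big(\sqrt{d(d+1)}-\sqrt{d(d-1)}\big)>0$ will already give the rest. I expect this single comparison to be the only genuine step: $w_2-w_1=-2+2\big(\sqrt{d(d+1)}-\sqrt{d(d-1)}\big)$, so $w_1<w_2$ is equivalent to $\sqrt{d(d+1)}-\sqrt{d(d-1)}>1$. I would prove this by rationalizing the left side as $\tfrac{2\sqrt d}{\sqrt{d+1}+\sqrt{d-1}}$, reducing the inequality to $2\sqrt d>\sqrt{d+1}+\sqrt{d-1}$, which after squaring reads $4d>2d+2\sqrt{d^2-1}$, i.e. $d>\sqrt{d^2-1}$, i.e. $d^2>d^2-1$ — true.

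Finally I would establish $w_4>3+2\sqrt2$ by monotonicity: $w_4(d)=2d+1+2\sqrt{d(d+1)}$ is strictly increasing in $d$ (both summands are), and $w_4(1)=3+2\sqrt2$, so $w_4>3+2\sqrt2$ for all $d>1$. The entire argument is elementary; the only mildly delicate point is the comparison $w_1<w_2$, and even that collapses to $d^2>d^2-1$ after the rationalization and squaring steps.
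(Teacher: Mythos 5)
Your proof is correct and complete: reducing $(4d-w-1/w)^2-4=0$ to the two reciprocal quadratics $w^2-(4d\mp2)w+1=0$, computing the discriminants $16d(d-1)$ and $16d(d+1)$, and settling the inequalities (including the only nontrivial one, $w_1<w_2$, via $2\sqrt d>\sqrt{d+1}+\sqrt{d-1}$) is exactly the intended route. The paper's own proof of this lemma is the single sentence ``The arguments are elementary,'' so your write-up simply supplies the details the paper omits.
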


\begin{proof}
The arguments are elementary.
\end{proof}

We now determine the locations of the singularities and branch cuts. 

\begin{lem}
\label{lem:branchcuts}
The branch cuts and singularities of \(S(w)\) occur in the following places:\begin{align*}
\mathcal{B}=&\{w:e^{S(w)}<0\} \cup (w_1,w_2)\cup (w_3,w_4) 
\\&\cup \left\{ x\pm i \sqrt{\frac{x}{4d-x}-x^2} : x \in (0,2d-\sqrt{4d^2-1})\cup(2d+\sqrt{4d^2-1},4d)
 \right\}
\end{align*}

Figure \ref{fig:contourPlots} shows plots of the branch cuts (in blue).

\begin{figure}
    \centering
        \includegraphics[width=11cm]{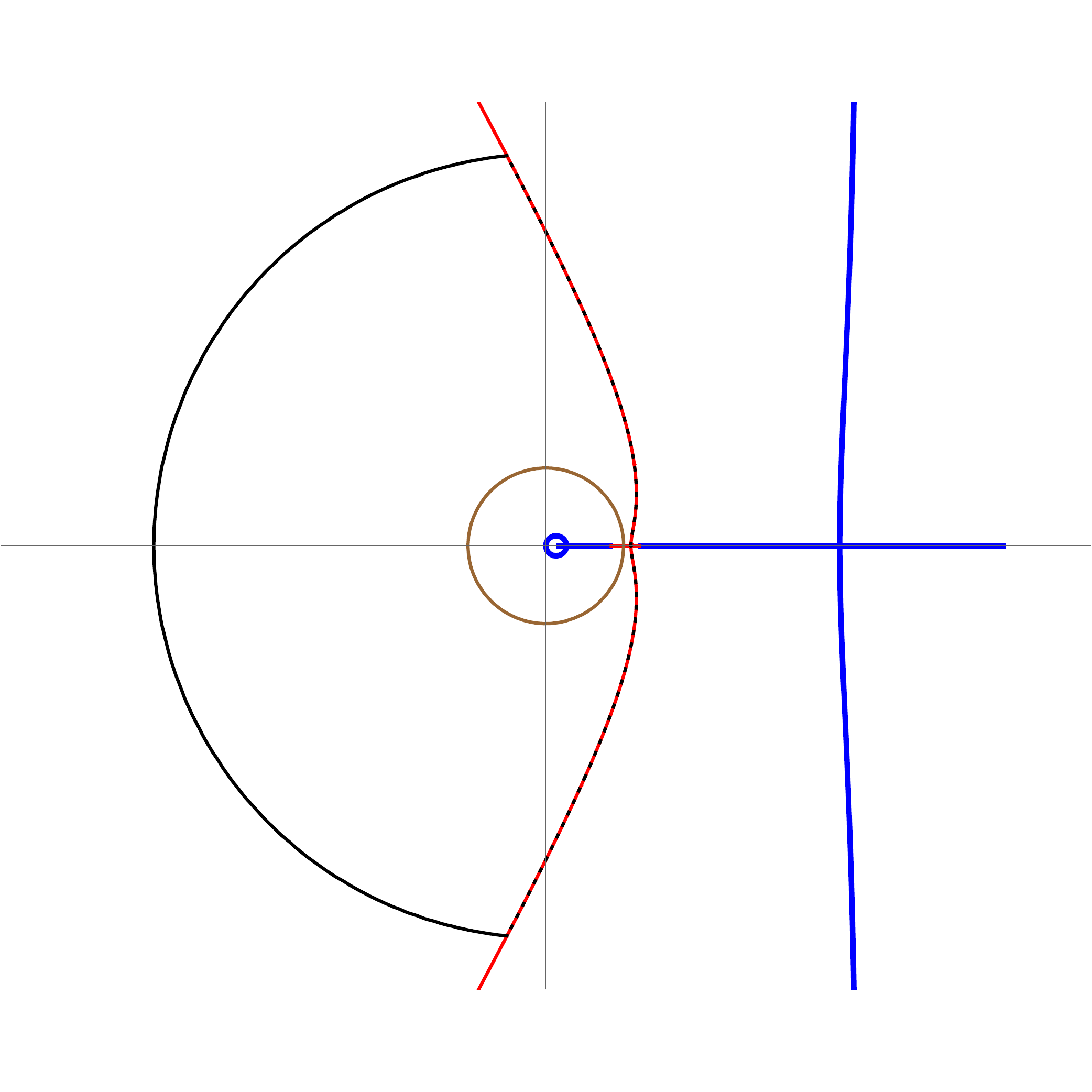}
        \caption{Plots of the branch cuts of $S(w)$ (blue), the contour $\Im(S(w))=0$ (red), the original contour (brown), the deformed contour $C'$ (the part along the steepest descent contour is red-and-black dashed, the circular part is black). The parameters are $a=0.5$ and $d=1.01$. }
        \label{fig:contourPlots}
\end{figure}
\end{lem}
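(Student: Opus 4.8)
The plan is to trace the non-analyticity of $S(w)$ back to its three multivalued ingredients, visible in \eqref{eq:Sdeff}--\eqref{eq:Sinnum}: the square root $\sqrt{(4d-w-1/w)^2-4}$ entering $z_+(w)$ through \eqref{eq:zpm}, and the two logarithmic contributions $-\log z_+$ and $-a\log w$. I would organize the argument as an \emph{algebraic cut} coming from the square root together with a \emph{logarithmic cut} coming from the logarithms, and show that these produce exactly the three pieces listed in $\mathcal{B}$.

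First I would handle the square-root cut. Writing $Q(w)=(4d-w-1/w)^2-4$, the branch points are the zeros of $Q$, which are precisely $w_1<w_2<w_3<w_4$ by Lemma \ref{lem:sqroot-neg}, and the branch cut is the locus where $Q(w)$ is real and negative. Now $Q(w)$ is a negative real exactly when $4d-w-1/w$ is either (i) real with absolute value $<2$, or (ii) purely imaginary. For (i), the condition $\Im(w+1/w)=0$ forces $\Im w=0$ or $|w|=1$; on the unit circle $4d-w-1/w=4d-2\Re w\geq 4d-2>2$ gives $Q>0$ and hence no cut (consistent with the earlier lemma on $|w|=1$), while on the real axis $|4d-x-1/x|<2$ reduces via $x+1/x\in(4d-2,4d+2)$ to $x\in(w_1,w_2)\cup(w_3,w_4)$. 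For (ii), imposing $\Re(w+1/w)=4d$ and solving for $\Im w$ yields the conjugate arcs $x\pm i\sqrt{\frac{x}{4d-x}-x^2}$; the requirement that the radicand be nonnegative, namely $x(x^2-4dx+1)\geq 0$ with $0<x<4d$, cuts out exactly $x\in(0,2d-\sqrt{4d^2-1})\cup(2d+\sqrt{4d^2-1},4d)$. These three loci are the last two lines of $\mathcal{B}$.

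Next I would treat the logarithmic cut. Away from the square-root locus, $z_+(w)$ is a single analytic branch and $S(w)=\log(1/(z_+w^a))$, so $S$ is analytic except where its argument $e^{S(w)}=1/(z_+w^a)$ meets the principal logarithm's cut, i.e. where $e^{S(w)}$ is a negative real. This is exactly the first piece $\{w:e^{S(w)}<0\}$, which simultaneously absorbs the cut of $-\log z_+$ (where $z_+<0$) and of $-a\log w$ (from the $w^a$ factor), since both are recorded in the total phase of $e^{S(w)}$. Taking the union of this logarithmic locus with the algebraic loci from the previous step gives $\mathcal{B}$.

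I expect the main obstacle to be case (ii) of the square-root analysis: correctly deriving the arc equation $x\pm i\sqrt{\frac{x}{4d-x}-x^2}$ from $\Re(w+1/w)=4d$ and pinning down the precise $x$-range on which the radicand stays nonnegative, together with the bookkeeping required to be sure that the logarithmic locus $\{e^{S(w)}<0\}$ and the algebraic locus do not secretly generate further cuts elsewhere—in particular verifying that the chosen branch of $w^a$ keeps its cut inside the stated set rather than spilling onto the whole negative real axis. The individual case distinctions are elementary, but they must be assembled carefully so that the union matches $\mathcal{B}$ exactly.
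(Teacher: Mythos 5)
Your treatment of the square-root locus is correct and is essentially identical to the paper's: the same dichotomy ($4d-w-1/w$ real of modulus less than $2$, versus purely imaginary), the same use of Lemma \ref{lem:sqroot-neg} to get $(w_1,w_2)\cup(w_3,w_4)$, the same exclusion of the unit circle, and the same derivation of the arcs $x\pm i\sqrt{\tfrac{x}{4d-x}-x^2}$ with the stated $x$-ranges. The problem is the logarithmic part, and it is precisely the issue you flag at the end but do not resolve. Your claim that the cut of $w^a$ is \emph{absorbed} into $\{w: e^{S(w)}<0\}$ is false for $0<a<1$. Take $w$ on the negative real axis: there $4d-w-1/w>4d>2$, so $z_+(w)>1$ is real and positive, while the principal branch of $w^a$ has boundary values $|w|^a e^{\pm i\pi a}$; hence $e^{S(w)}=1/\left(z_+(w)w^a\right)$ has argument $\mp\pi a\neq\pi$, so the negative real axis is \emph{not} contained in $\{e^{S(w)}<0\}$, and yet $S$ jumps by $2\pi i a$ across it. Choosing a different branch of $w^a$ does not help by itself: its cut must join $0$ to $\infty$, and you would then have to prove that such a cut can be threaded entirely inside $\mathcal{B}$, which you do not do. As you have set things up, the lemma is simply not true.

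The paper closes this hole with a hypothesis you never invoke: immediately before this lemma it restricts to values of $r$ and $a$ with both $r$ and $ar$ positive integers. Then the object that actually needs to be single-valued when the contour is deformed, namely the integrand $G(w)e^{rS(w)}=G(w)\,z_+(w)^{-r}w^{-ar}$, involves only an integer power of $w$, so the negative real axis carries no branch cut at all — this is exactly item (2) of the paper's proof, and it is why the negative reals are legitimately absent from $\mathcal{B}$. Your argument needs this integrality step (or some substitute for it) to be complete. A minor further omission: you do not mention the singularity at $w=0$, which the paper lists as a separate item.
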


\begin{proof}

There are branch cuts which come from the square root term in the denominator in \eqref{eq:Sinden}. These occur when \((4d-w-1/w)^2-4<0\) which, by Lemma \ref{lem:sqroot-neg}, means $w$ is in $(w_1,w_2)$ or $(w_3,w_4)$.  

We have several sources of branch cuts and singularities.

\begin{enumerate}
\item We have a singularity at \(w=0\).

\item By assumption both \(r\) and \(ar\) are positive integers and there is no branch cut when \(w<0\).

\item We have a branch cut from the square root term in \eqref{eq:Sinden}
when \((4d-w-1/w)^2-4<0\). In this case there are two possibilities. Either \(\Re(4d-w-1/w)=0\) or \(\Im(4d-w-1/w)=0\). 

First suppose \(\Im(4d-w-1/w)=0\). Then writing \(w=x+ i y\) for \((x,y)\in \R^2\) we have 
\begin{align*}
0& = \Im (4d-w-1/w) 
= \Im \left( 4d-x-i y - \frac{x-iy}{x^2+y^2} \right) 
 = y\left(-1 + \frac{1}{x^2+y^2} \right)
\end{align*}
which has 2 solutions, either \(y=0\) or \(x^2+y^2=1\). When $x^2+y^2=1$ we have $-2\leq w+1/w\leq 2$ so $$(4d-w-1/w)^2-4\geq (4d-2)^2-4>0$$ since $d>1$. It follows that we, in fact, don't have branch cuts along the unit circle. When $y=0$ Lemma \ref{lem:sqroot-neg} implies that the branch cuts occur when $w$ is in $(w_1,w_2)$ or $(w_3,w_4)$.  

Next, suppose \(\Re(4d-w-1/w)=0\). In this case we have
\begin{align*}
0&
= \Re \left( 4d-x-i y - \frac{x-iy}{x^2+y^2} \right)  
= 4d-x\left( 1 + \frac{1}{x^2+y^2} \right).
\end{align*}
Since $d>1$, it follows that $0<x<4d$ and, solving for $y$, we get that the branch cut lies on the curve $w=x\pm i\sqrt{ \frac{x}{4d-x}-x^2}$. Since $y$ must be real, we should have $ \frac{x}{4d-x}-x^2>0$, which implies $x<2d-\sqrt{4d^2-1}$ or $x>2d+\sqrt{4d^2-1}$. Combining with $0<x<4d$ we get that either 
$$0<x<2d-\sqrt{4d^2-1}$$
or
$$2d+\sqrt{4d^2-1}<x<4d.$$

\item We have a branch cut for the logarithm when $e^{S(w)}<0$. It will not be necessary for us to identify where exactly this occurs, so we do not study this case.

\end{enumerate}

\end{proof}

Our deformed contour will go through the critical point $\wcr$ and initially follow the steepest descent curve $\Im(S(w))=\Im(S(\wcr))=0$, where the last equality was established in Lemma \ref{lem:real-roots}. We now show that this contour does not approach the set $\mathcal{B}$ from Lemma \ref{lem:branchcuts}.

\begin{lem}
\label{lem:curveAvoidsB}
For any \(w\in \mathcal B\setminus \wcr\) we have \(\Im S(w)\neq 0\).
\end{lem}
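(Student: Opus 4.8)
The plan is to split $\mathcal{B}$ into its three constituent pieces—the logarithmic branch locus $\{w : e^{S(w)} < 0\}$, the two real intervals $(w_1,w_2)$ and $(w_3,w_4)$ on which the radicand $(4d-w-1/w)^2-4$ is negative, and the two conjugate arcs coming from $\Re(4d-w-1/w)=0$—and to verify $\Im S(w)\neq 0$ on each piece separately, always using $S(w) = -\log z_+(w) - a\log w$. The logarithmic locus is immediate: if $e^{S(w)}$ is a negative real, then $e^{\Re S(w)}e^{i\Im S(w)}<0$ forces $e^{i\Im S(w)}=-1$, so $\Im S(w)\in\pi+2\pi\Z$, which never equals $0$.

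On the real intervals $(w_1,w_2)$ and $(w_3,w_4)$, a sign analysis of $b:=4d-w-1/w$ (using Lemma \ref{lem:sqroot-neg} for the endpoints) shows $b\in(-2,2)$, so the two roots $z_\pm=\tfrac12(b\pm i\sqrt{4-b^2})$ are complex-conjugate points on the unit circle; in particular $\arg z_+=\pm\arccos(b/2)\in(-\pi,0)\cup(0,\pi)$. Since every $w_i$ is positive by Lemma \ref{lem:sqroot-neg}, $w$ is real and positive on these intervals, hence $a\log w\in\R$ and $\Im S(w)=-\arg z_+\neq 0$ (the two boundary determinations of $z_+$ differ only by sign, and both are nonzero).

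The last piece is the crux. There $4d-w-1/w$ is purely imaginary, say $=i\beta$ with $\beta\in\R$, so the radicand is $(i\beta)^2-4=-(\beta^2+4)<0$ and $z_+$ is purely imaginary, giving $\arg z_+=\pm\pi/2$ and hence $\Im(-\log z_+)=\mp\pi/2$. To control the remaining term $a\arg w$, I will use that on these arcs $\Re(4d-w-1/w)=0$ forces $|w|^2=\frac{x}{4d-x}$ (writing $w=x+iy$), so that $\cos(\arg w)=x/|w|=\sqrt{x(4d-x)}$. On both admissible $x$-ranges one checks $x(4d-x)\in(0,1)$, whence $|\arg w|<\pi/2$, and since $0<a\le 1$ this yields $|a\arg w|<\pi/2$. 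Therefore $\Im S(w)=\mp\pi/2-a\arg w$ cannot vanish; in fact, matching the sign of $\beta$ (equivalently, which root carries modulus exceeding one) against the sign of $y$ shows $\Im S$ keeps a definite sign on each arc.

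I expect the main obstacle to be exactly this third case, specifically the bookkeeping of which of $z_\pm$ is the analytic continuation of $z_+$ (the root selected by $|z_+|>1$ near $(z,w)=(1,1)$) as $w$ traverses the arc, and the matching of its value $\pm\pi/2$ with the sign of $\arg w$. The two clean facts that make it work are the identity $|w|^2=x/(4d-x)$ forcing $|\arg w|<\pi/2$ and the purely-imaginary value of $z_+$ forcing $\arg z_+=\pm\pi/2$; everything then reduces to the strict inequality $|a\arg w|<\pi/2$. Finally, I will note that $\wcr$ lies in none of the three pieces—by \eqref{eq:4d-wcr} and the last claim of Lemma \ref{lem:real-roots} the radicand is positive and $e^{S(\wcr)}>0$ there—so the exclusion of $\wcr$ in the statement is merely a formality.
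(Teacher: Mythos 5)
Your proof is correct and takes essentially the same route as the paper's: the same three-case decomposition of \(\mathcal B\), resting on the same key facts (on the real cuts the square root is purely imaginary so \(z_\pm\) lie on the unit circle off the real axis; on the arcs \(\Re(4d-w-1/w)=0\) the roots \(z_\pm\) are purely imaginary; and \(\Re w>0\) forces \(|\arg (w^a)|=a|\arg w|<\pi/2\)). The only cosmetic difference is that you evaluate \(\Im S(w)=-\arg z_+-a\arg w\) directly, whereas the paper equivalently argues that the expression inside the logarithm in \eqref{eq:Sinnum} cannot be a positive real in each case.
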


\begin{proof}
In order for \(\Im S(w)=0\) we must have 
\begin{equation}
\label{eq:insideS}
\frac{4d-w-\frac{1}{w}-\sqrt{\left(4d-w-\frac{1}{w}\right)^2-4}}{2 w^a}>0.
\end{equation}
We consider cases based on the components of $\mathcal B$. 
\begin{enumerate}

\item Suppose $w\in\mathbb{R}$. If \(w<0\) then the numerator in \eqref{eq:insideS} is positive but the denominator is imaginary which gives a contradiction. If \(w>0\) then \eqref{eq:insideS} implies \((4d-w-1/w)^2-4>0\) so we're not on a branch cut.

\item If $w$ is such that $e^{S(w)}<0$ then $\Im(S(w))$ is $\pm \pi$. 

\item If \(w= x \pm i \sqrt{\frac{x}{4d-x}-x^2}\) with $x\in (0,2d-\sqrt{4d^2-1})\cup(2d+\sqrt{4d^2-1})$ then from the proof of Lemma \ref{lem:branchcuts} we have that $\Re(4d-w-1/w)=0$ so the numerator of \eqref{eq:insideS} is purely imaginary so in order for \eqref{eq:insideS} to hold we would need $w^a$ to be purely imaginary as well. Since $x>0$, the angular component of $w$ is less than $\pi/2$ and since $0\leq a\leq 1$ so is the angular component of $w^a$.

\end{enumerate}

\end{proof}

It follows from Lemma \ref{lem:real-roots} that if $w\in\mathbb{R}$ and is near the critical point $\wcr$ then $\Im(S(w))=0$. Thus the curve $\Im(S(w))=0$ passing through the critical point $\wcr$ has two components - an interval on the real line and a portion that leaves $\wcr$ in the vertical direction. We now compute the sign of $S''(\wcr)$ to show that the vertical direction is the steepest descent direction.

\begin{lem}
\label{lem:SppSign}
We have $S''(\wcr)>0$. 
\end{lem}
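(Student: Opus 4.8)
The plan is to show $S''(\wcr)>0$ by differentiating the critical-point equation rather than attempting to differentiate the somewhat unwieldy closed form \eqref{eq:Sinden} twice directly. From \eqref{eq:Sp} we have $S'(w)=-(z_+'/z_+ + a/w)$, so differentiating once more gives
\begin{align*}
S''(w) = -\frac{z_+'' z_+ - (z_+')^2}{z_+^2} + \frac{a}{w^2}.
\end{align*}
At the critical point we may use \eqref{eq:Speqn}, namely $a = -w z_+'/z_+$, to eliminate $a$ and rewrite everything in terms of $z_+$, $z_+'$, $z_+''$ evaluated at $\wcr$. The first step is therefore to obtain clean expressions for these derivatives of $z_+$. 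Since $z_+$ satisfies the algebraic relation $z_+ + z_+^{-1} = 4d - w - w^{-1}$ coming from \eqref{eq:zpm}, I would differentiate this relation implicitly: from $(1-z_+^{-2})z_+' = -1 + w^{-2}$ one solves for $z_+'$, and differentiating again yields $z_+''$ in terms of $z_+$, $z_+'$ and $w$. This avoids the square root entirely and keeps the algebra manageable.

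After substituting these into the expression for $S''(\wcr)$ and clearing denominators, the second step is to reduce the inequality $S''(\wcr)>0$ to a statement about explicitly computable quantities. Here I would exploit the facts already established in Lemma \ref{lem:real-roots}: that $\wcr=w_+>1$ is real, that $4d - \wcr - \wcr^{-1}>0$ by \eqref{eq:4d-wcr}, and that the radicand $(4d-\wcr-\wcr^{-1})^2-4>0$, so that $z_+(\wcr)$ and $z_+'(\wcr)$ are real and $z_+(\wcr)>1$. These positivity facts should let me control the signs of all the pieces. A useful intermediate reduction is to note $z_+ - z_+^{-1} = \sqrt{(4d-w-1/w)^2-4}$ is real and positive at $\wcr$, so $1 - z_+^{-2}>0$, making the denominators unambiguous.

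The main obstacle I expect is algebraic: after elimination, $S''(\wcr)$ will be a rational expression in $\wcr$ and $d$ whose positivity is not manifest, and I will need to massage it into a form that is visibly positive (for instance a sum or ratio of squares times the positive quantities $4d-\wcr-\wcr^{-1}$ and $z_+-z_+^{-1}$). The cleanest route is probably to express $S''(\wcr)$ directly in terms of $a$, $d$, and $\wcr$ using \eqref{eq:Speqn} to reintroduce $a$ where convenient, then use the explicit formula \eqref{eq:wcr} for $\wcr$ together with \eqref{eq:u} and the inequalities \eqref{eq:ineqForU} and \eqref{eq:4d-wcr} from the proof of Lemma \ref{lem:real-roots}. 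If a fully symbolic simplification proves too messy, a fallback is to factor out the manifestly positive terms and check that the remaining factor is a polynomial in $d$ and $a$ with nonnegative coefficients on the relevant range $d>1$, $0<a\leq 1$. Either way, the conceptual content is light — the work is entirely in organizing the differentiation of the implicit relation for $z_+$ and then certifying the sign using the positivity constraints already proved for $\wcr$.
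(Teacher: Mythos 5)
Your proposal is correct and follows essentially the same route as the paper's proof: differentiate \eqref{eq:Sp}, use the critical-point relation \eqref{eq:Speqn}, reduce the claim to the sign of $z_+''(\wcr)$, and settle that sign with the positivity facts from Lemma \ref{lem:real-roots} and \eqref{eq:4d-wcr}. The only difference is computational — the paper differentiates the explicit square-root formula \eqref{eq:zpm} to get the manifestly negative expression \eqref{eq:zpp} for $z_+''(\wcr)$, while you differentiate the implicit relation $z_+ + z_+^{-1} = 4d - w - w^{-1}$, which gives $z_+'' = -\bigl(2w^{-3}+2z_+^{-3}(z_+')^2\bigr)/\bigl(1-z_+^{-2}\bigr)<0$ at $\wcr$ just as transparently; in either version every term is sign-definite on its face, so the algebraic massaging you anticipate as the ``main obstacle'' never actually arises.
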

\begin{proof}
From \eqref{eq:Sp} we compute the second derivative at the critical point to be
\begin{align}\label{eq:spp}
S''(\wcr)  = - \left( \frac{z_+'' }{z_+}  - \frac{ z_+'^2}{z_+^2} - \frac{a}{ \wcr^2} \right) = - \frac{z_+'' }{z_+}  + \frac{a(1+a)}{ \wcr^2} ,
\end{align}
where the last equation was obtained from \eqref{eq:Sp}. From \eqref{eq:zpm} we have 
\begin{align}\label{eq:zpp}
z_+''(\wcr) & =\frac{-2}{\sqrt{\left(4d-\wcr-\frac 1\wcr\right)^2-4}}\left(  \frac{z_+(\wcr)}{\wcr^3}+ \frac{\left( -1+\wcr^{-2} \right)^2}{\left( 4d-\wcr -\frac 1\wcr \right)^2-4} \right).
\end{align}

Using \eqref{eq:4d-wcr} and \eqref{eq:zpm} we see that $z_+''(\wcr)<0$, whence $S''(\wcr)>0$. 
\end{proof}

It follows from Lemma \ref{lem:SppSign} that $\Re(S(w))$ increases when $w$ moves away from $\wcr$ along the real axis. Thus, $\Re(S(w))$ decreases when $w$ moves away from $\wcr$ along the vertical component of the curve $\Im(S(w))=0$, so that is the steepest descent curve.

When $|w|=1$ we have $4d-w-1/w>2$ so the denominator in \eqref{eq:Sinden} is a positive real so $\Im(S(w))\neq 0$ along the unit circle. This, combined with Lemma \ref{lem:curveAvoidsB} implies that the steepest descent curve avoids all the branch cuts and singularities, so must escape to infinity. Thus the steepest descent curve will have points $w$ on it with $|w|$ arbitrarily large.

Before we describe the deformed contour completely, two simple lemmas.

\begin{lem}
\label{lem:steepestCurveGoesNegative}
If $w$ is such that $\Im(S(w))=0$ and $|w|\gg 1$ then $\Re(w)<0$. 
\end{lem}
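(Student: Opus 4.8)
The plan is to study the exponent $S(w)$ as $|w|\to\infty$ and translate the condition $\Im S(w)=0$ into a statement about $\arg w$. The first step is to note that, since $e^{S(w)}=1/(z_+ w^a)$ by \eqref{eq:Sdeff}, the equation $\Im S(w)=0$ is equivalent to $e^{S(w)}\in\mathbb{R}_{>0}$, i.e. to
\[ z_+(w)\,w^{a}\in\mathbb{R}_{>0}. \]
Hence it suffices to control the argument of $z_+(w)\,w^a$ for large $|w|$.

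Next I would extract the asymptotics of $z_+$. The roots of $zP(z,w)=0$ satisfy $z_+z_-=1$ and $z_++z_-=4d-w-\tfrac1w$ with $|z_-|<1<|z_+|$, so for $|w|\to\infty$ the small root is $z_-=O(1/|w|)$ and therefore
\[ z_+(w)=\Bigl(4d-w-\tfrac1w\Bigr)-z_-=-w+4d+O\!\left(\tfrac1{|w|}\right)=-w\bigl(1+o(1)\bigr). \]
(The same expansion follows directly from \eqref{eq:zpm}.) Consequently $\arg z_+(w)=\arg(-w)+o(1)$ and
\[ z_+(w)\,w^{a}=-w^{\,1+a}\bigl(1+o(1)\bigr)\qquad(|w|\to\infty). \]

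Writing $w=\rho e^{i\theta}$ with $\theta\in(-\pi,\pi]$, the displayed asymptotic shows that $z_+ w^a$ can be positive real only if $-w^{1+a}$ is asymptotically positive real, i.e. only if $(1+a)\theta\to\pm\pi$, which forces $\theta\to\pm\pi/(1+a)$. For the relevant range $0<a<1$ we have $\pi/(1+a)\in(\tfrac\pi2,\pi)$, a fixed distance $\tfrac{\pi(1-a)}{2(1+a)}>0$ away from $\tfrac\pi2$; thus once $\rho$ is large enough the $o(1)$ error cannot push $\theta$ back across $\tfrac\pi2$, so $\cos\theta<0$ and $\Re w=\rho\cos\theta<0$. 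The conjugate branch $\theta\to-\pi/(1+a)\in(-\pi,-\tfrac\pi2)$ follows identically, using the symmetry $S(\bar w)=\overline{S(w)}$. It remains to exclude zeros of $\Im S$ with $\theta$ near $0$ or $\pi$: on the positive real axis $z_+$ is a large negative real, giving $\arg(z_+w^a)=\pi$ and $\Im S(w)=\pm\pi\neq0$, while on the negative real axis $z_+$ is a large positive real and $\arg w^a=a\pi$, giving $\Im S(w)=-a\pi\neq0$; both estimates persist in a fixed angular neighborhood of the real axis, ruling out large-$|w|$ zeros there.

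The step I expect to be delicate is the uniformity of the $o(1)$ error as $a\uparrow 1$, where the angular gap $\tfrac{\pi(1-a)}{2(1+a)}$ closes. There one must retain the subleading term, coming from the $+4d$ in $z_+=-w+4d+\cdots$, which contributes $+\tfrac{4d\sin\theta}{\rho}$ to $\arg z_+$ and hence determines on which side of the imaginary axis the curve approaches; the diagonal value $a=1$ is genuinely degenerate (the critical point formula \eqref{eq:wcr} breaks down there) and is instead handled by the reflection symmetry about the line $y=x$.
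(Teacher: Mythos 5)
Your overall strategy is the same as the paper's, namely to turn the condition $\Im S(w)=0$ at large $|w|$ into a constraint on $\arg w$ via the asymptotics of $S$, but you and the paper track \emph{different branches} of the square root in \eqref{eq:Sinnum}, and this is not cosmetic. The paper substitutes $4d-w-1/w=-w+O(1)$ and $\sqrt{w^2}=w$ for $\Re(w)\ge 0$, which makes the numerator of \eqref{eq:Sinnum} behave like $-2w$ (the \emph{large} root) and gives $e^{S(w)}\approx -w^{1-a}$. You instead keep the defining relation $e^{-S(w)}=z_+(w)w^a$ with $z_+$ the root of modulus greater than $1$, so the numerator behaves like $2z_-(w)\approx -2/w$ and $e^{S(w)}\approx -w^{-(1+a)}$. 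Your reading is the correct analytic continuation of $S$ from $w_+$ along the deformed contour: since $z_+z_-=1$, the two roots can exchange moduli only where both lie on the unit circle, i.e.\ where $4d-w-1/w\in[-2,2]$, and that locus is contained in the cut set $\mathcal{B}$ of Lemma \ref{lem:branchcuts}, which the contour avoids by Lemma \ref{lem:curveAvoidsB}. The paper's step $\sqrt{w^2+O(w)}=\sqrt{w^2}\,(1+o(1))$ fails exactly where it matters: writing $(4d-w-1/w)^2-4=w^2-8dw+O(1)$, for $w$ near the imaginary axis $w^2$ sits on the cut of the principal square root, and the correction $-8dw$ pushes the argument to the other side of that cut whenever $0\le\Re(w)<4d$; for instance at $w=iY$ the continued square root is $-iY(1+o(1))=-w(1+o(1))$, not $+w$. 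Thus your angle condition $(1+a)\arg w\to\pm\pi$, i.e.\ $|\arg w|\to\pi/(1+a)>\pi/2$ for $a<1$, is the computation that actually proves the lemma on the component of $\mathbb{C}\setminus\mathcal{B}$ containing the steepest-descent curve; the paper's computation describes the branch valid only in the other component (roughly $\Re(w)>4d$), which that curve cannot reach.

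The genuine gap is the one you flag yourself: $a=1$. There your angular margin $\pi(1-a)/(2(1+a))$ closes, and the next-order term decides the matter \emph{against} the statement: with $e^{S(w)}=-w^{-2}\bigl(1+4d/w+O(w^{-2})\bigr)$, the level set at radius $\rho$ lies at $\arg w\approx\pm\bigl(\pi/2-2d/\rho\bigr)$, so $\Re(w)\to 2d>0$; moreover the vertical branch of $\{\Im S=0\}$ through $w_+$ must escape to infinity while avoiding $\mathcal{B}$, hence is forced onto this asymptote. So on the diagonal the lemma as written is false, not merely delicate, and your proposed repair via the reflection about $y=x$ cannot work, because the direction $a=1$ is fixed by that symmetry. (The paper's proof appears to cover $a=1$ only because of the branch error above.) The damage downstream is limited: what the contour deformation really requires is the decay in Lemma \ref{lem:ReSsmall}, and with the correct branch $\Re S(w)\approx-(1+a)\log|w|\ll 0$ for all large $|w|$ in the relevant component regardless of the sign of $\Re(w)$, so for $a=1$ one should either restate this lemma accordingly or route the closing arc of $C'$ through the region $\Re(w)\approx 2d$. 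In summary: your proof is sound for $0\le a<1$ and more careful than the paper's own on the branch issue, but the diagonal direction needs a separate statement and argument.
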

Figure \ref{fig:contourPlots} shows the contour $\Im(S(w))=0$ in red.
\begin{proof}
Suppose $\Re(w)\geq 0$. Then $\sqrt{w^2}=w$ so from \eqref{eq:Sinnum} we get
$$S(w)=\log\left(\frac{-w-\sqrt{w^2}+O(1)}{2w^a}\right)=\log\left(-w^{1-a}+o(w^{1-a})\right).$$
Since $\Im(S(w))=0$, we must have $\Im(-w^{1-a})$ should be small, which implies $w^{1-a}$ must be close to the negative real line. Since $1-a\geq 0$, that cannot happen when $\Re(w)\geq 0$. 
\end{proof}

\begin{lem}
\label{lem:ReSsmall}
If $w$ is such that $|w|\gg 1$ and $\Re(w)<0$ then $\Re(S(w))\ll 0$. 
\end{lem}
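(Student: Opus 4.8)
The plan is to read $\Re S(w)$ straight off the definition \eqref{eq:Sdeff}, $S(w)=-\log(z_+(w)\,w^a)$, which gives
\[
\Re S(w) = -\log|z_+(w)| - a\log|w|.
\]
Since $0\le a\le 1$, so that $1+a\ge 1>0$, it then suffices to prove the single asymptotic statement that $|z_+(w)|\sim |w|$ as $|w|\to\infty$ with $\Re w<0$. Indeed, this would give $\Re S(w) = -(1+a)\log|w| + O(1)\to -\infty$, which is exactly the claim.

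To get the estimate on $z_+$, I would work with $A:=4d-w-\tfrac1w$, for which $z_+=\tfrac12\bigl(A+\sqrt{A^2-4}\bigr)$ by \eqref{eq:zpm}. A one-line computation gives $\Re A = 4d-(\Re w)\bigl(1+|w|^{-2}\bigr)$, and since $\Re w<0$ this is bounded below by $4d>4$; thus $A$ lies in the right half-plane with $|A|\ge \Re A>4$ uniformly on $\{\Re w<0\}$. This is the decisive input for the branch of the square root: the two square roots of $A^2-4$ are $\pm A + O(|A|^{-1})$, and since $\Re A>4$ the one with $\Re\ge 0$ (the principal root) must be the one near $+A$, so $\sqrt{A^2-4}=A+O(|A|^{-1})$. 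Hence $z_+=\tfrac12\bigl(A+\sqrt{A^2-4}\bigr)=A\bigl(1+o(1)\bigr)\sim -w$, giving $|z_+|\sim|w|$ and therefore $\Re S(w)\sim -(1+a)\log|w|\to-\infty$.

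Before concluding I would check that the principal-branch formula I used really coincides with the analytically continued $S$ on the left half-plane, i.e.\ that no branch cut is crossed. This follows from Lemmas \ref{lem:branchcuts} and \ref{lem:curveAvoidsB}: the intervals $(w_1,w_2)$, $(w_3,w_4)$ and the curves listed there all lie in $\{\Re w>0\}$; moreover $A^2-4$ never meets $(-\infty,0]$ when $\Re w<0$ (as $|A|>4$ rules out $A^2\in[0,4]$ and $\Re A>0$ rules out $A$ purely imaginary), so the principal square root is analytic on the whole left half-plane, and for large $|w|$ the logarithmic cut $\{e^{S(w)}<0\}$ sits on the positive real axis because $z_+w^a\sim -w^{1+a}$ is negative real only when $\arg w=0$.

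The main obstacle here is the branch bookkeeping rather than the size estimate: one must be certain that $\sqrt{A^2-4}\sim +A$ and not $-A$, and the hypothesis $\Re w<0$ is precisely what guarantees $\Re A>0$ and hence the correct sign. This is also why the lemma is stated only for $\Re w<0$: in the regime of Lemma \ref{lem:steepestCurveGoesNegative} the opposite sign occurs, the root flips to $\sqrt{A^2-4}\sim -A$ so that $z_+\sim -1/w\to 0$ and $\Re S(w)$ instead grows like $(1-a)\log|w|\to+\infty$, which is exactly why that lemma forces the steepest-descent curve into $\{\Re w<0\}$ so that the present estimate applies.
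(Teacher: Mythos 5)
Your proof is correct, and although it follows the same overall strategy as the paper's proof (extract the large-$|w|$ behavior of $z_+$ from \eqref{eq:zpm} in the left half-plane), it executes the one decisive step --- fixing the branch of $\sqrt{A^2-4}$ with $A=4d-w-\frac1w$ --- differently, and in fact more carefully than the paper does. Your inequality $\Re A = 4d-(\Re w)\left(1+|w|^{-2}\right)\geq 4d>4$ pins the principal root to $+A+O(|A|^{-1})$ and yields the uniform statement $|z_+|\sim|w|$, hence $\Re S(w)=-(1+a)\log|w|+o(1)$, on all of $\{\Re w<0,\ |w|\gg1\}$, including near and on the negative real axis, which is exactly where the circular arc of the deformed contour $C'$ terminates. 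The paper instead reduces to $a=0$, passes to polar coordinates $w=ve^{i\theta}$, $\theta\in(\pi/2,\pi]$, and evaluates the square root as $\sqrt{w^2}=ve^{-i\theta}$, arriving at $\Re S(w)=-\log\left|v\sin\theta+o(v)\right|$. That evaluation is problematic: $\left(ve^{-i\theta}\right)^2\neq w^2$ unless $2\theta\in\pi\mathbb{Z}$, it is inconsistent with the defining property $|z_+|>1$ near $\theta=\pi$ (it would make $z_+=\frac12\left(A+\sqrt{A^2-4}\right)$ of size $o(v)$), and the resulting bound degenerates precisely as $\theta\to\pi$, i.e.\ on the part of $C'$ lying on the negative real axis. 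So your argument is not merely an alternative; it supplies the uniform bound that the contour estimate in the main theorem actually needs. Your consistency check is also the right one: all square-root cuts of Lemma \ref{lem:branchcuts} lie in $\{\Re w>0\}$, and $A^2-4$ avoids $(-\infty,0]$ when $\Re w<0$; equivalently, since $|A|\geq\Re A>4$ there, the two roots of $z^2-Az+1=0$ never have equal modulus in the left half-plane, so your principal-branch formula and the analytically continued $z_+$ both coincide with \emph{the root of modulus greater than one}.

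One caveat, though it is not load-bearing for the lemma: the final paragraph of your proposal is mistaken. For $|w|\gg1$ with $\Re w\geq0$, what can flip is only the representation of $z_+$ by the principal square root (there $\Re A$ may turn negative, so the principal root sits near $-A$, and $\frac12\left(A+\sqrt{A^2-4}\right)$ with the principal branch returns the small root $z_-$). The continued $z_+$ itself --- the modulus-greater-than-one root, which is what $S$ is built from along any path avoiding the intervals $[w_1,w_2]\cup[w_3,w_4]$ of Lemma \ref{lem:sqroot-neg} --- still satisfies $z_+\sim A\sim -w$, so $\Re S\to-\infty$ in the right half-plane as well. The lemma is confined to $\Re w<0$ not because $\Re S$ would instead blow up to $+\infty$, but because Lemma \ref{lem:steepestCurveGoesNegative} --- a statement about $\Im S$, not $\Re S$ --- places the far portion of the deformed contour in the left half-plane, and because the left half-plane is cut-free, which is what makes the branch bookkeeping (yours or anyone's) clean.
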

\begin{proof}
From \eqref{eq:Sinden} it is clear that it suffices to do the case $a=0$. In that case we have
\begin{align*}
\Re(S(w))=\log\left(\left|\frac{2}{-w+O(1)+\sqrt{w^2+O(1)}}\right|\right)
\end{align*}
Since $\Re(w)<0$ we have $w=v e^{i \theta}$ with $v>0$ and $\frac{\pi}2<\theta\leq \pi$. It follows that $\sqrt{w^2}=v e^{-i \theta}$ and we get
\begin{align*}
\Re(S(w))=\log\left(\frac{2}{\left|v(e^{-i\theta}-e^{i\theta})+o(v)\right|}\right)
=-\log\left(\left|v\sin\theta+o(v)\right|\right)\ll 0.
\end{align*}

\end{proof}

We deform the integration contour $C$ in \eqref{eq:PContInt} to the contour $C'$ which is obtained as follows. We start at the critical point $\wcr$ and follow the vertical branch of the curve $\Im(S(w))=0$. By the discussion before Lemma \ref{lem:steepestCurveGoesNegative} we follow this curve until $|w|\gg 1$ at which point by Lemma \ref{lem:steepestCurveGoesNegative} we have $\Re(w)<0$. From that point we follow the circle centered at the origin until we hit negative real line. The rest of $C'$ is the reflection of the first part with respect to the real line. Figure \ref{fig:contourPlots} shows plots of the original and deformed contours. 

To show that the visited region is within a constant of the scaled limit curve we need the following lemma.

\begin{lemma}\label{lem:coefficient inequalities}
    For each \(d>1\), there exist constants \(0<b_d<B_d\) that depend on only on $d$ such that
    \begin{align*}
        b_d \leq -\scr  \leq B_d,  \qquad b_d \leq S''(\wcr)  \leq B_d,  \qquad \text{and} \qquad b_d \leq G(\wcr)  \leq B_d.
    \end{align*}
\end{lemma}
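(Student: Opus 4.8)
The plan is to regard each of $-\scr$, $S''(\wcr)$, and $G(\wcr)$ as a function of $a$ on the compact interval $[0,1]$, with $d$ held fixed, and to show that each is continuous and strictly positive there. Compactness then closes the argument: a continuous function that is strictly positive on a compact set attains a strictly positive minimum and a finite maximum, so taking $b_d$ to be the least of the three minima and $B_d$ the greatest of the three maxima gives the claim.

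The heart of the matter is to confine the critical point $\wcr=\wcr(a)$ to a fixed compact real interval that is bounded away from the singularity at $w=0$ and from the branch points of Lemma \ref{lem:sqroot-neg}. By Lemma \ref{lem:real-roots} we have $\wcr>1$; and by \eqref{eq:4d-wcr} together with the inequality $(4d-\wcr-\wcr^{-1})^2-4>0$ from the proof of Lemma \ref{lem:real-roots}, the quantity $4d-\wcr-\wcr^{-1}$ is positive with square exceeding $4$, hence $4d-\wcr-\wcr^{-1}>2$. This says $\wcr+\wcr^{-1}<4d-2$, i.e. $\wcr<w_3$ in the notation of Lemma \ref{lem:sqroot-neg}. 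For continuity I would describe $a\mapsto\wcr(a)$ as the inverse of the map $w\mapsto (w-w^{-1})/\sqrt{(4d-w-w^{-1})^2-4}$ of \eqref{eq:Speqn}, which Lemma \ref{lem:real-roots} shows to be continuous and strictly increasing, equal to $0$ at $w=1$ and to $1$ at $w=d+\sqrt{d^2-1}$ (setting the ratio to $1$ forces $\wcr+\wcr^{-1}=2d$). Hence $\wcr(a)$ is continuous on $[0,1]$ with range the compact interval $[1,d+\sqrt{d^2-1}]$; this description also sidesteps the removable $0/0$ that the explicit formula \eqref{eq:wcr} exhibits at $a=1$. Since $d+\sqrt{d^2-1}<w_3$, the range lies in $[1,w_3)$, avoiding $w=0$ and the branch points $w_2<1<w_3$, so that $z_+(w)$, the function $G(w)$ of \eqref{eq:G}, and $S''(w)$ are real-analytic on it; composing with $\wcr(a)$ and the continuous factor $\wcr^{\,a}$ makes all three target functions continuous in $a$.

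Strict positivity and finiteness are then read off on this range. Because $\wcr$ is increasing in $a$, the quantity $4d-\wcr-\wcr^{-1}$ decreases from $4d-2$ at $a=0$ to $2d$ at $a=1$, so it stays in $[2d,4d-2]$, and in particular $(4d-\wcr-\wcr^{-1})^2-4\geq 4d^2-4>0$ is bounded away from $0$; thus $G(\wcr)=1/(\wcr\sqrt{(4d-\wcr-\wcr^{-1})^2-4})$ is positive and finite. Writing $z_++z_-=4d-\wcr-\wcr^{-1}\geq 2d>2$ with $z_+z_-=1$ forces $z_+>1$, so $-\scr=\log(z_+\wcr^{\,a})>0$ because $\wcr^{\,a}\geq 1$. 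Finally $S''(\wcr)>0$ by Lemma \ref{lem:SppSign}, and it is finite for the same reason $G(\wcr)$ is, namely that the square-root denominators appearing in \eqref{eq:zpp} are bounded away from $0$.

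The one delicate step is the behavior at $a=1$: the explicit expression \eqref{eq:wcr} is a $0/0$ indeterminate form there, and I expect the main work to be confirming that $\wcr$ extends continuously with finite limit $d+\sqrt{d^2-1}$ lying strictly below the branch point $w_3$, so that $4d-\wcr-\wcr^{-1}$ — and hence the square-root denominators in $G$ and $S''$ — stay bounded away from both $0$ and the critical value $2$ uniformly in $a\in[0,1]$. This uniform separation is precisely what guarantees simultaneously that $b_d>0$ and $B_d<\infty$.
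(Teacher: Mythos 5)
Your proposal is correct, and its quantitative core coincides with the paper's: both arguments rest on confining $\wcr$ to $[1,\,d+\sqrt{d^2-1}]$ via the monotonicity from Lemma \ref{lem:real-roots}, so that $4d-\wcr-\wcr^{-1}\in[2d,\,4d-2]$ and $\sqrt{(4d-\wcr-\wcr^{-1})^2-4}$ stays in $[2\sqrt{d^2-1},\,4\sqrt{d(d-1)}]$, bounded away from $0$ and $\infty$. The difference is the wrapper. The paper never invokes continuity or compactness: it pushes these interval bounds directly through \eqref{eq:G}, \eqref{eq:Sdeff}, \eqref{eq:spp} and \eqref{eq:zpp} to produce explicit constants, including a two-sided estimate on $z_+''(\wcr)$ in order to bound $S''(\wcr)$ away from zero. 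Your continuity-compactness layer buys you that lower bound on $S''(\wcr)$ from mere pointwise positivity (Lemma \ref{lem:SppSign}), with no need to estimate $z_+''$ at all; the price is that you must prove $a\mapsto\wcr(a)$ is continuous on all of $[0,1]$, in particular through the $0/0$ indeterminacy of \eqref{eq:wcr} at $a=1$. You treat that endpoint more carefully than the paper, which simply asserts $\wcr(1)=d+\sqrt{d^2-1}$; your derivation of this value from \eqref{eq:Speqn} (forcing $\wcr+\wcr^{-1}=2d$) is the right justification. One attribution is loose: Lemma \ref{lem:real-roots} proves monotonicity of $a\mapsto\wcr(a)$, not of the forward map $w\mapsto(w-w^{-1})/\sqrt{(4d-w-w^{-1})^2-4}$; the two are equivalent here because the quartic analysis gives a unique critical point $w>1$ for each $a$, but that inversion (uniqueness plus the intermediate value theorem) deserves a sentence rather than a citation. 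Net comparison: the paper's route is constructive and yields usable constants, which is what makes the explicit $h(d)$ in the remark following Theorem \ref{thm:main} meaningful, whereas yours is softer but equally valid and somewhat more robust, since it only needs continuity and positivity of the relevant functions over a compact parameter interval rather than closed-form estimates.
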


\begin{proof}
    From the proof of Lemma \ref{lem:real-roots} we have that \(\wcr\) is an increasing function of \(a\) with \(\wcr(0)  = 1\) and \(\wcr(1) = d + \sqrt{d^2-1}\), which gives the inequality \(1 \leq \wcr \leq d + \sqrt{d^2-1}\).
    Thus we have 
    \begin{align}\label{eq:wcrBdd}
        2d&\leq 4d-\wcr - 1/\wcr \leq 4d-2.
    \end{align}
    Furthermore, \((4d-w - 1/w)^2- 4\) is a decreasing function of \(w\) in the range \(1\leq w\leq 2d+\sqrt{4d^2-1}\). Since \(1\leq \wcr\leq d + \sqrt{d^2-1}<2d+\sqrt{4d^2-1}\) we find
    \begin{align*}
       4(d^2-1)&< (4d-\wcr - 1/\wcr)^2- 4 < 16d(d-1).
    \end{align*}
    Inserting these inequalities into \eqref{eq:G} gives
    \begin{align*}
        \frac{1}{(d+\sqrt{d^2-1}) \sqrt{16d(d-1)}}& \leq G(\wcr)  \leq \frac{1}{\sqrt{4(d^2-1)}}.
    \end{align*}

Next, from \eqref{eq:zpm} and \eqref{eq:wcrBdd} it is clear that 
\begin{equation}\label{eq:zpBdd}
1<d+\sqrt{d^2-1}\leq z_+(\wcr)\leq 2d-1+2\sqrt{d(d-1)}.
\end{equation}
Thus we obtain the desired bounds for \(S(w)\) which is given by \eqref{eq:Sdeff}. In particular, we have 
    \begin{align*}
\scr  \leq -\log\left( d+\sqrt{d^2-1} \right)<0.
    \end{align*}

    Last, from \eqref{eq:spp} and the bounds \eqref{eq:wcrBdd} and \eqref{eq:zpBdd} it is clear that to show \(S''(\wcr)\) is bounded, away from zero, it is enough to show that $z_+''$ is bounded by constants that depend on $d$ only. This is immediate from \eqref{eq:zpp}, \eqref{eq:wcrBdd} and \eqref{eq:zpBdd}.

\end{proof}

\subsubsection{Proof of the first main theorem}
\begin{proof}[Proof of Theorem \ref{thm:main}]

From Lemma \ref{lem:branchcuts} it follows that when deforming the contour of integration in \eqref{eq:PContInt} from $C$ to $C'$ we do not pick up any residues. From the discussion before Lemma \ref{lem:steepestCurveGoesNegative} and from Lemma \ref{lem:ReSsmall} we obtain that along the contour $C'$ we have that $\Re(S(w))$ is maximized at the critical point $\wcr$. Thus, as $r\to\infty$, the  contribution to the integral in \eqref{eq:PContInt} from points of $C'$ away from the critical point is going to be exponentially smaller than the contribution from the vicinity of $\wcr$. 

Making the change of variable $w=\wcr+i\frac{y}{\sqrt{r}}$ we obtain 
\begin{align}
\nonumber
P_d(r,ar)&=\frac{c(d-1)}{2\pi\sqrt{r}}G(\wcr)e^{rS(\wcr)}\int_{-\infty}^{\infty}e^{-\frac{S''(\wcr)y^2}{2}}(1+o(1))dy.
\\&\label{eq:PdAsympt}
=\frac{c(d-1)}{\sqrt{2\pi S''(\wcr)r}}G(\wcr)e^{rS(\wcr)}(1+o(1)).
\end{align}

Based on Lemma \ref{lem:prob-limit-shape}, given a direction $0\leq a\leq 1$ define the outer and inner radii $r_o$ and $r_i$ by 
\begin{align}
\label{eq:r_io}
P_d(r_o,ar_o)=\frac{c(d-1)}{n}\text{ and }P_d(r_i,ar_i)=\frac{cd}{n}.
\end{align}

Solving these equations using \eqref{eq:PdAsympt} we obtain
\begin{align}
    \label{eq:r_oasympt}
    r_o & \leq \frac{-1}{\scr}\left( \log n -\frac{1}{2}\log \log n - \log \left( \frac{1}{G(\wcr)} \sqrt{\frac{2\pi S''(\wcr)}{-\scr}} \right)\right) + q_o,\\
    \label{eq:r_iasympt}
    r_i & \geq \frac{-1}{\scr}\left( \log n -\frac{1}{2}\log \log n - \log \left( \frac{1}{G(\wcr)} \sqrt{\frac{2\pi S''(\wcr)}{-\scr}} \frac{d}{d-1}\right) \right) + q_i,
\end{align}
where \(q_o,q_i \xrightarrow[]{n\to \infty} 0\). 


Since $\lim_{n\to\infty} \frac{r_o}{r_i}=1$ it follows from Lemma \ref{lem:prob-limit-shape} that if we scale by $\log n$ the region visited by the Leaky-ASM started with $n$ chips at the origin, it will converge to the parametric curve 
\begin{align}
\label{eq:limitShapeScr}
-\left(  \frac{1}{S(\wcr)},  \frac{a}{S(\wcr)} \right) \text{ for } 0\leq a\leq 1,
\end{align}
and its reflections with respect to the coordinate axes and the diagonal $y=x$.
This curve is in fact the dual of the gaseous component of the amoeba of \(P(z,w)\). See Section \ref{sec:amoebae} for more details.

It follows from Lemma \ref{lem:coefficient inequalities}, that for each \(d>1\) there exists a constant $k_d>0$ depending on $d$ only, such that
\begin{align*}
    r_o & \leq \frac{-1}{\scr}\left( \log n -\frac{1}{2}\log \log n\right) + k_d + q_o,\\
    r_i & \geq \frac{-1}{\scr}\left( \log n -\frac{1}{2}\log \log n)\right) - k_d + q_i,
\end{align*}
Thus if we scale the limiting curve by \(\log n -\frac{1}{2}\log \log n\), then for each direction specified by \(a\) the visited region is within a constant depending only on \(d\) from the scaled curve if $n$ is large enough.

\end{proof}

\begin{remark}
It is straightforward to verify that as $d\to 1$ we have  
\begin{align*}
    S(\wcr)&=-2\sqrt{a^2+1}\sqrt{d-1}(1+o(1)).
\end{align*}
It follows that the curve becomes
\begin{align*}
\frac{1}{2\sqrt{d-1}}\left(\frac1{\sqrt{a^2+1}},\frac a{\sqrt{a^2+1}}\right)(1+o(1)),
\end{align*}
so it converges to a circle. This is further studied in the next section. See Figures \ref{fig:4dplots} and \ref{fig:dsmall}.

On the other hand, as $d\to \infty$ we have  
    \begin{align*}
        S(\wcr)&=-(1+a)\log(d)(1+o(1)).
    \end{align*}
It follows that the curve becomes
\begin{align*}
\frac{1}{\log d}\left(\frac1{1+a},\frac a{1+a}\right)(1+o(1)),
\end{align*}
so it converges to the $L^1$ ball. See Figures \ref{fig:4dplots} and \ref{fig:dlarge}. The heuristic explanation why the $L^1$ ball should appear in this limit is the same as the one given in Remark \ref{rem:limitTriangle} for the 2-directional version of the model. 

\end{remark}

\begin{remark}
Note, that since we obtain the limit shape by reflecting a smooth curve with respect to various lines, a priori the limit shape might not be differentiable along the axes of symmetry, however it is easy to verify that the curve \eqref{eq:limitShapeScr} has slope infinity as $a\downarrow 0$ and slope $-1$ as $a\uparrow 1$, so the reflections of it result in a differentiable curve.
\end{remark}

\section{The Limit Shape of the Leaky-ASM When the Leakiness Vanishes}
\label{sec:LeakTo0}
In this section we give a proof of Theorem \ref{thm:leakTo0}. We let \(d=1+t_n\) and assume that \(t_n\to 0\) as $n\to\infty$. Note, that Lemma \ref{lem:prob-limit-shape} still applies, so to compute the limit shape we need to compute the asymptotic behavior of \(P_d(r_n, ar_n)\) as \(r_n\to\infty\) simultaneously with \(t_n \to 0\). We can assume we are in the regime \(r_n \sqrt{t_n}\to\infty\).		

As in the previous section we will work with the contour-integral representation of $P_d$ given by \eqref{eq:PContInt} where $G$ is given in \eqref{eq:G} and $S$ in \eqref{eq:Sinnum}. As before let \(\wcr\) be the larger critical point of \(S(w)\) given by \eqref{eq:wcr}. A series expansion gives 
\begin{align}
\label{eq:wcrSeries}
    \wcr & = 1+\frac{2 a \sqrt{t_n}}{\sqrt{1+a^2}}+\frac{2 a^2 t_n}{1+a^2}+O(t_n^{3/2}).
\end{align}
We deform the integration contour in the same way as in Section \ref{sec:4dshape}. The analysis is exactly the same. The only difference is in the main contribution coming from the vicinity of the critical point which we now study.

\begin{prop}
\label{prop:asym-expansions}
    Let \(\beta_n=\sqrt{\frac{\sqrt{t_n}}{r_n}}\). In the regime when $n\to\infty$, $r_n\to\infty$, $t_n\to 0$ with $r_n\sqrt{t_n}\to\infty$ we have the expansions
    \begin{align}\label{eq:asym-expansionS}
        r_n\left( S(\wcr+i\beta_n y) - S(\wcr) \right)&=-y^2\frac{(1+a^2)^{\frac 32}}{2}+o(1),
        \\ \label{eq:asym-expansionG}
        G(\wcr+i\beta_ny)&=G(\wcr)+o(1)=\frac{\sqrt{1+a^2}}{4\sqrt{t_n}}+o(1).
    \end{align}
\end{prop}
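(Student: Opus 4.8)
The plan is to prove the two asymptotic expansions by Taylor-expanding $S$ and $G$ around the critical point $\wcr$ using the series expansion \eqref{eq:wcrSeries} for $\wcr$ itself. For the $S$-expansion \eqref{eq:asym-expansionS}, since $\wcr$ is a critical point, $S'(\wcr)=0$, so the Taylor expansion of $S(\wcr+i\beta_n y)$ begins at second order:
\begin{align*}
S(\wcr+i\beta_n y)-S(\wcr)=-\frac{S''(\wcr)}{2}\beta_n^2 y^2+O(\beta_n^3 y^3).
\end{align*}
Multiplying by $r_n$ and recalling $\beta_n^2=\sqrt{t_n}/r_n$, the leading term becomes $-\tfrac12 S''(\wcr)\sqrt{t_n}\,y^2$. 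So the crux is to show that $S''(\wcr)\sqrt{t_n}\to(1+a^2)^{3/2}$, i.e. that $S''(\wcr)\sim(1+a^2)^{3/2}/\sqrt{t_n}$ as $t_n\to 0$. I would obtain this by substituting \eqref{eq:wcrSeries} into the explicit formula \eqref{eq:spp} for $S''(\wcr)$ (together with \eqref{eq:zpp} for $z_+''$ and \eqref{eq:zpm} for $z_+$), and carefully tracking which terms blow up like $1/\sqrt{t_n}$. The dominant contribution should come from the square-root term $\sqrt{(4d-w-1/w)^2-4}$ evaluated at $\wcr$: using \eqref{eq:4d-wcr} one finds $(4d-\wcr-1/\wcr)^2-4$ is of order $t_n$, so its square root is of order $\sqrt{t_n}$ and appears in the denominator of $z_+''$, producing the $1/\sqrt{t_n}$ growth.

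For the $G$-expansion \eqref{eq:asym-expansionG}, I would first establish the value $G(\wcr)=\tfrac{\sqrt{1+a^2}}{4\sqrt{t_n}}(1+o(1))$ directly from \eqref{eq:G}, namely $G(\wcr)=1/(\wcr\sqrt{(4d-\wcr-1/\wcr)^2-4})$. Since $\wcr\to 1$ and, as noted above, $(4d-\wcr-1/\wcr)^2-4$ is of order $t_n$, this reduces to computing the precise constant in that square root. Plugging \eqref{eq:wcrSeries} into \eqref{eq:4d-wcr} gives $4d-\wcr-1/\wcr=2(2d-u_+)/(1-a^2)$ with $u_+=\sqrt{4a^2d^2+(1-a^2)^2}$; expanding at $d=1+t_n$ shows $(4d-\wcr-1/\wcr)^2-4\to 16t_n/(1+a^2)$, whose square root is $4\sqrt{t_n}/\sqrt{1+a^2}$, yielding the claimed leading constant. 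The second equality, $G(\wcr+i\beta_n y)=G(\wcr)+o(1)$, then follows because the perturbation $i\beta_n y$ is of size $\beta_n=(t_n/r_n^2)^{1/4}\ll\sqrt{t_n}$ relative to the scale $\sqrt{t_n}$ on which $\wcr-1$ and the square-root term vary, so $G$ changes only by a lower-order amount; concretely $G'(\wcr)\beta_n y$ is negligible against $G(\wcr)$.

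The main obstacle I anticipate is bookkeeping in the $S''$ computation: the formula \eqref{eq:spp} is a difference $-z_+''/z_+ + a(1+a)/\wcr^2$ of two terms, and while the second term stays bounded, I must verify that $-z_+''(\wcr)/z_+(\wcr)$ genuinely grows like $(1+a^2)^{3/2}/\sqrt{t_n}$ rather than having its leading singular part cancel. This requires expanding \eqref{eq:zpp} to sufficient order in $t_n$, using $z_+(\wcr)\to 1$, $\wcr\to 1$, and the order-$\sqrt{t_n}$ value of the square-root factor, and confirming the surviving leading term matches. A secondary technical point is justifying that the cubic and higher Taylor remainders in the $S$-expansion are genuinely $o(1)$ after multiplication by $r_n$: the third-order term is $O(r_n\beta_n^3)=O(r_n\cdot(t_n/r_n^2)^{3/4})=O(t_n^{3/4}/\sqrt{r_n})$, which tends to $0$ in the regime $r_n\sqrt{t_n}\to\infty$, $t_n\to 0$, but one should check this holds uniformly for $y$ in the relevant range so that the substitution into the Laplace-type integral in \eqref{eq:PdAsympt} is valid.
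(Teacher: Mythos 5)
Your overall strategy --- Taylor expansion at the critical point combined with the asymptotics of $S''(\wcr)$ --- is legitimate and in fact tidier than the paper's proof, which re-derives the quadratic behavior by expanding $v$, $\sqrt{v^2-4}$, $z_+$ and the logarithms term by term and recovers the cancellation of linear terms (automatic for you, since $S'(\wcr)=0$) from the critical point equation \eqref{eq:Speqn}. But there is a genuine gap at precisely the step you defer: the claim $S''(\wcr)\sqrt{t_n}\to(1+a^2)^{3/2}$ is asserted, never verified, and it is false. Carrying out the substitution you propose, \eqref{eq:wcrSeries} gives
\[
\sqrt{\left(4d-\wcr-\tfrac{1}{\wcr}\right)^2-4}=\frac{4\sqrt{t_n}}{\sqrt{1+a^2}}\left(1+o(1)\right),
\qquad
\frac{\left(1-\wcr^{-2}\right)^2}{\left(4d-\wcr-\tfrac{1}{\wcr}\right)^2-4}=a^2+o(1),
\]
and $z_+(\wcr)/\wcr^3=1+o(1)$, so \eqref{eq:zpp} yields
\[
z_+''(\wcr)=-\frac{2(1+a^2)\sqrt{1+a^2}}{4\sqrt{t_n}}\left(1+o(1)\right)=-\frac{(1+a^2)^{3/2}}{2\sqrt{t_n}}\left(1+o(1)\right),
\]
whence \eqref{eq:spp} gives $S''(\wcr)=\frac{(1+a^2)^{3/2}}{2\sqrt{t_n}}(1+o(1))$ --- half of what your argument needs. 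Your method therefore produces
\[
r_n\left(S(\wcr+i\beta_n y)-S(\wcr)\right)=-\frac{S''(\wcr)\sqrt{t_n}}{2}\,y^2+o(1)\longrightarrow-\frac{(1+a^2)^{3/2}}{4}\,y^2,
\]
not the claimed $-\tfrac12(1+a^2)^{3/2}y^2$. A sanity check at $a=0$, where $\wcr=1$ exactly and $S''(1)=2/\sqrt{(4d-2)^2-4}=\tfrac{1}{2\sqrt{t_n}}(1+o(1))$, confirms the limit $-y^2/4$.

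This factor of two is not a defect of your method; it exposes a slip in the paper's own computation. When the paper passes from its (correct) expansion of $2z_+(\wcr+i\beta_n y)$ to that of $z_+(\wcr+i\beta_n y)/z_+(\wcr)$, the $y^2$-term $y^2\frac{(1+a^2)^{3/2}}{2}\frac{\beta_n^2}{\sqrt{t_n}}$ must be divided by $2z_+(\wcr)=\vcr+\sqrt{\vcr^2-4}$, giving $y^2\frac{(1+a^2)^{3/2}}{4z_+(\wcr)}\frac{\beta_n^2}{\sqrt{t_n}}$; the paper instead records $y^2\frac{(1+a^2)^{3/2}}{2z_+(\wcr)}\frac{\beta_n^2}{\sqrt{t_n}}$, and this doubling propagates into \eqref{eq:asym-expansionS}. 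So the correct constant there is $(1+a^2)^{3/2}/4$, and neither your outline (which would require the false $S''$ asymptotic) nor the paper's printed proof establishes the display as stated; to repair your write-up you should carry out the $S''$ computation honestly and correct the constant in the statement. The discrepancy is harmless downstream: it only changes the $O(1)$ Gaussian prefactor in the proof of Theorem \ref{thm:leakTo0}, where only the exponential rate $e^{r_nS(\wcr)}$ matters. Your treatment of \eqref{eq:asym-expansionG} is correct and essentially identical to the paper's, and your observation that $O(r_n\beta_n^3)=o(1)$ handles the cubic remainder, provided (as you note) one checks uniformity in $y$, which requires a routine bound on $S'''$ near $\wcr$.
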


\begin{proof}
    We have
    \begin{align}
\label{eq:S-Scr}
        r_n\left( S(\wcr + i \beta_n y) - S(\wcr) \right) & =-r_n \log \left( \frac{z_+(\wcr + i \beta_n y)}{z_+(\wcr)} \right) - a r_n \log \left( \frac{\wcr + i \beta_n y}{\wcr} \right),
    \end{align}
where $z_+(w)$ is defined in \eqref{eq:zpm}.
    We expand each term separately in a series to identify the leading order terms as \(n\to\infty\). For notational purposes let \(v(w)=4(1+t_n)-w-1/w\) and $\vcr=v(\wcr)$, so that 
    \begin{align*}
        z_+(w)&
        =\frac{v+\sqrt{v^2-4}}{2}.
    \end{align*}
    From \eqref{eq:wcrSeries} we get that
    \begin{align}
\label{eq:vcr}
        \vcr&=2+\frac{4 t_n}{1+a^2}+O(t_n^{3/2})
    \end{align}
    and 
    \begin{align*}
        v(\wcr+i\beta_n y)
        &=\vcr-i\beta_n y\left(1-\wcr^{-2}\right)+\beta_n^2 y^2+O(\beta_n^2\sqrt{t_n}),
    \end{align*}
where we used that $\beta_n\ll \sqrt{t_n}$, which follows from $r_n\sqrt{t_n}\to\infty$. It follows that
    \begin{align*}
        &\sqrt{v(\wcr+i\beta_ny)^2-4}
        =\sqrt{\vcr^2-4-2 i y\beta _n \vcr(1-\wcr^{-2})+4\beta_n^2 y^2+O(\beta_n^2\sqrt{t_n})},
    \end{align*}
where we used that
    \begin{align*}
        1-\wcr^{-2}&=\frac{4a\sqrt{t_n}}{\sqrt{1+a^{2}}}+O(t_n),
    \end{align*}
which follows from \eqref{eq:wcrSeries}. 
It follows from \eqref{eq:vcr} that
    \begin{align*}
        \vcr^2-4&=\frac{16t_n}{1+a^2}+O(t_n^2)
    \end{align*}
so $\vcr^2-4$ is the dominant order term in the square root above. Factoring out the dominant term and expanding the square root gives
    \begin{multline}
    \label{eq:v^2-4}
        \sqrt{v(\wcr+i\beta_ny)^2-4}
\\
        =\sqrt{\vcr^2-4}\left( 1-i y\frac{\beta_n \vcr(1-\wcr^{-2})}{\vcr^2-4} +
        y^2\frac{\beta_n^2\vcr}{\vcr^2-4}\left(1+\frac{\vcr^2(1-\wcr^{-2})^2}{4(\vcr^2-4)}\right)+
        O\left(\frac{\beta_n^2}{\sqrt{t_n}}\right)\right).
\\
        =\sqrt{\vcr^2-4}-i y\frac{\beta_n \vcr(1-\wcr^{-2})}{\sqrt{\vcr^2-4}} +
        y^2\frac{(1+a^2)^{\frac 32}}{2}\frac{\beta_n^2}{\sqrt{t_n}}+
        O(\beta_n^2).
    \end{multline}
    Inserting this series into the equation for \(z_+\) gives
    \begin{align*}
        2z_{+}(\wcr+i\beta_ny)
        &=\left(\vcr+\sqrt{\vcr^2-4}\right)\left(1-\frac{i\beta_n y(1-\wcr^{-2})}{\sqrt{\vcr^2-4}} \right)
        +y^2\frac{(1+a^2)^{\frac 32}}{2}\frac{\beta_n^2}{\sqrt{t_n}}
        +O(\beta_n^2).
    \end{align*}
    It follows that
    \begin{align*}
        \frac{z_+(\wcr+i\beta_n y)}{z_+(\wcr)}& =1-\frac{i\beta_n y(1-\wcr^{-2})}{\sqrt{\vcr^2-4}}+y^2\frac{(1+a^2)^{\frac 32}}{2z_+(\wcr)}\frac{\beta_n^2}{\sqrt{t_n}}+O(\beta_n^2).
    \end{align*}
 Using $z_+(\wcr)=1+O(\sqrt{t_n})$, the definition of $\beta_n$ and the fact that at a critical point \eqref{eq:Speqn} holds, we get
    \begin{align}
\label{eq:zpPart}
        \log\left(\frac{z_+(\wcr+i\beta_n y)}{z_+(\wcr)}\right)
        =-iy\frac{a\beta_n}{\wcr}+y^2\frac{(1+a^2)^{\frac 32}}{2r_n}+O\left(\frac{\sqrt{t_n}}{r_n}\right).
    \end{align}

    Expanding the second term in \eqref{eq:S-Scr} gives
    \begin{align*}
        \log \left( \frac{\wcr + i \beta_n y}{\wcr} \right)& 
         = iy\frac{\beta_n}{\wcr}+O\left( \beta_n^2 \right).
    \end{align*}
    Combining this with \eqref{eq:zpPart}, the expression \eqref{eq:S-Scr} simplifies to \eqref{eq:asym-expansionS}.

    For the expansion of \(G(\wcr+ i \beta_ny) \) we use \eqref{eq:v^2-4} to obtain
    \begin{align*}
        \frac{1}{G(\wcr+i\beta_n y)}&=\left( \wcr+i\beta_n y \right)\sqrt{v(\wcr+i\beta_n y)^2-4}\\
        &=(\wcr+O(\beta_n))\left(\sqrt{\vcr^2-4}+O(\beta_n)\right)\\
        &=\frac{4\sqrt{t_n}}{\sqrt{1+a^2}}+O(\beta_n)
        =\frac{4\sqrt{t_n}}{\sqrt{1+a^2}}(1+o(1)).
    \end{align*}

\end{proof}

We are now ready to give a proof of the second main theorem. 
\begin{proof}[Proof of Theorem \ref{thm:leakTo0}]
Deforming the contour of integration in \eqref{eq:PContInt} as described in section \ref{subsubsec:defCont}, ignoring the exponentially smaller contribution of the portion of the deformed contour away from the critical point, rescaling $w$ as $w=\wcr+i\beta_n y$ near the critical point and using the asymptotics obtained in Proposition \ref{prop:asym-expansions}, we have
\begin{align*}
    P_d(r_n, ar_n)
    & = \frac{ct_n}{2 \pi i} e^{r_nS(\wcr)} \oint_{C'} \left(G(\wcr)+o(1)\right) e^{ r_n (S(w)-S(\wcr))} dw\\
    &=\frac{ct_n \beta_n}{2 \pi} e^{r_nS(\wcr)} G(\wcr) \int_{-\infty}^{\infty} e^{-\frac{y^2(1+a^2)^{\frac 32}}{2}}\left( 1+o(1) \right)dy.
\end{align*}
Using \eqref{eq:asym-expansionG} and the expansion
\begin{align*}
    S(\wcr)=-2\sqrt{(1+a^2)t_n} + o(\sqrt{t_n})
\end{align*}
we get
\begin{align}
\nonumber
    P_d(r_n, ar_n)
    &=\frac{ct_n \beta_n}{\sqrt{2\pi}(1+a^2)^{\frac 34}}\frac{\sqrt{1+a^2}}{4\sqrt{t_n}}e^{-2r_n\sqrt{(1+a^2)t_n}}\left( 1+o(1) \right)\\
\label{eq:PdAsymptDto1}
    &=\frac{\gamma t_n}{\sqrt{r_n\sqrt{t_n}}}e^{-2r_n\sqrt{(1+a^2)t_n}}\left( 1+o(1) \right),
\end{align}
where the constant \(\gamma=\frac{c}{4\sqrt{2\pi}\left( 1+a^2 \right)^{1/4}}\). 

As in \eqref{eq:r_io}, for a direction \(0\leq a\leq 1\) we define the inner and outer radii, \(r_o\) and \(r_i\), by 
\begin{align*}
    P_d(r_o,ar_o) & = \frac{c(d-1)}{n} =\frac{c t_n}{ n},\\
    P_d(r_i,ar_i) &= \frac{cd}{n} = \frac{c(1+t_n)}{n}.
\end{align*}
Comparing these to \eqref{eq:r_oasympt},\eqref{eq:r_iasympt} and \eqref{eq:PdAsymptDto1} to \eqref{eq:PdAsympt} we obtain
\begin{align*}
    r_o\sqrt{t_n} &= \frac{1}{2\sqrt{(1+a^{2})}}\log( n)\left( 1+o(1) \right)\\
    r_i\sqrt{t_n} &= \frac{1}{2\sqrt{(1+a^{2})}}\log\left( n t_n \right)\left( 1+o(1) \right),
\end{align*}
where the last one is valid as long as $nt_n\to\infty$. 
Taking ratios of the inner and outer curves we get
\begin{align*}
    \frac{r_i}{r_o}&=\left(1+\frac{\log(t_n)}{\log(n)}\right)\left( 1+o(1) \right)
\end{align*}
Using Lemma \ref{lem:prob-limit-shape} we conclude that if $t_n$ decays slower than any power of $n$, say like $\log(n)$, then $r_i/r_o\to 1$, so we have that the boundary of the visited region of the Leaky-ASM, scaled down by $\log(n)/\sqrt{t_n}$ converges to a circle. 

If $t_n$ converges to $0$ faster, say $t_n=\frac{1}{n^{1-\alpha}}$, 
then \(\frac{r_i}{r_o}\to\alpha\) so after scaling we can only bound the boundary of the visited region between between two circles of different radii.
\end{proof}

\section{Limit Shapes and Amoebae}
\label{sec:amoebae}

In this section we show that the limit shapes we obtain are in fact certain portions of the duals of the amoebae of the corresponding spectral curves. The connection between limit shapes and amoebae in similar models is known to experts in the field, but we present the details in our case for completeness. See, for example, \cite{kos2006} for a discussion of amoebae in dimer models. A more general survey of mathematical results related to amoebae can be found in \cite{m2004}. 

Given a polynomial \(P(z,w)\) let \(V=\{(z,w)\in\C^2: P(z,w)=0\}\) be its \emph{spectral curve}. The \emph{amoeba} of \(P(z,w)\) denoted by \(\mathbb{A}(P)\) is the image of the spectral curve in \(\C^2\) under the map 
\[(z,w) \mapsto (\log|z|, \log|w|).\]
The bounded component of the complement of the amoeba is known as its \emph{gaseous phase} (see \cite{kos2006}).

In this section we show that the curve \eqref{eq:limitShapeScr} is the dual of the gaseous phase of the amoeba of \(P(z,w)\) given by \eqref{eq:P-uniform}. Figure \ref{fig:amoeba} shows a plot of the amoeba and its dual when $d=2$. 

\begin{figure}
    \centering
        \includegraphics[width=9cm]{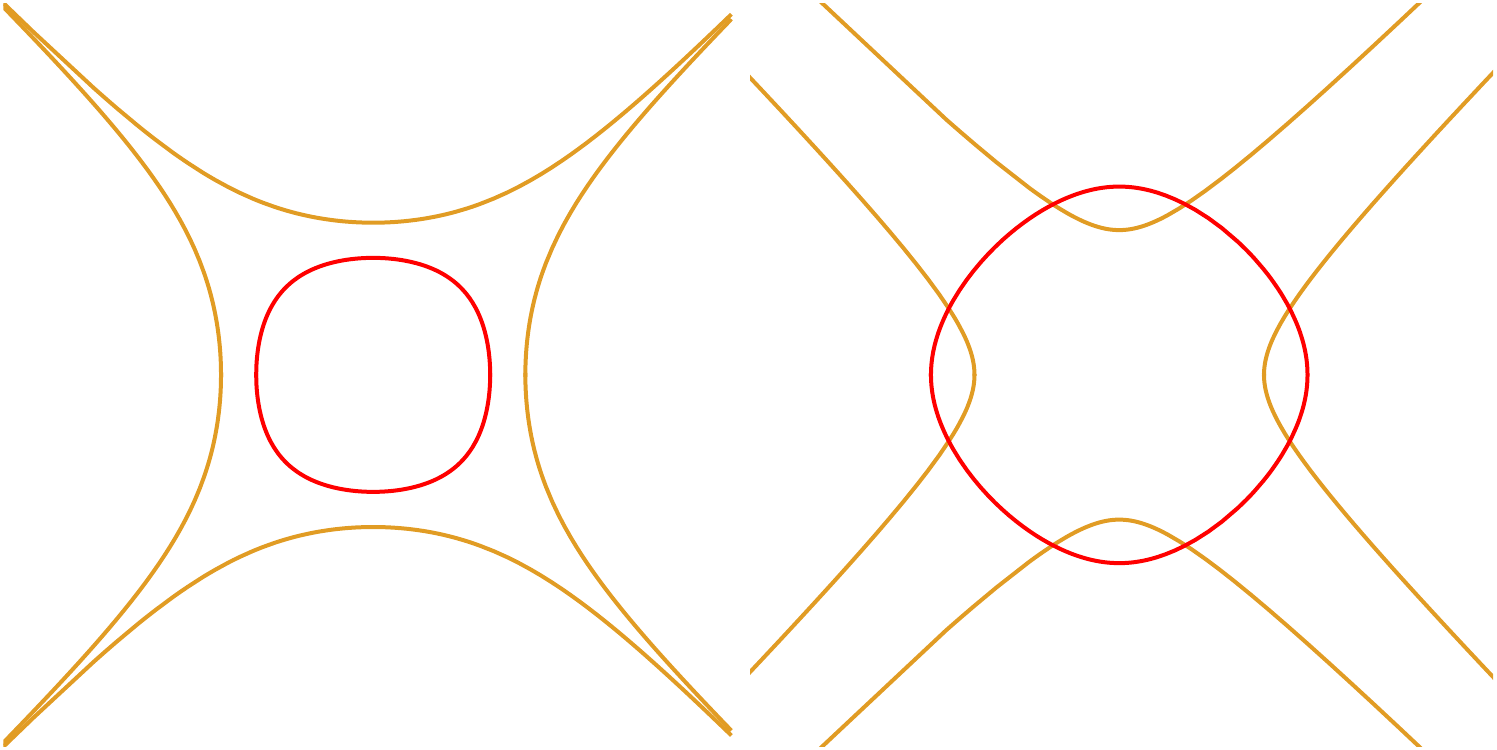}
        \caption{On the left we have the boundary of the amoeba of \(P(z,w)=\frac{4d-z-\frac{1}{z}-w-\frac{1}{w}}{4(d-1)}\) with $d=2$. The boundary of the the gaseous component is in red. On the right we have the dual curve, with the portion corresponding to the gas boundary, i.e. the limit shape in our model, in red.}
        \label{fig:amoeba}
\end{figure}

Abusing notation, let \(S(z,w)= -\ln(z)-a\ln(w).\) We have $S(w)=S(z_+(w),w)$, where $z_+(w)$ is the solution of $P(z_+(w),w)=0$ given by \eqref{eq:zpm}. 

For a parametric curve \((x(t),y(t))\) its dual curve can be parametrized as $$\frac{1}{Q(t)}\left(y'(t),-x'(t) \right)$$ where 
\[Q(t) = y(t) x'(t)-x(t)y'(t).\]
Applying this to the curve \eqref{eq:limitShapeScr} we obtain
\begin{align*}
Q(a)=-\frac{a}{S}\frac{S_a}{S^2}+\frac{1}{S}\left(\frac{aS_a}{S^2}-\frac 1S\right)=-\frac1{S^2},
\end{align*}
where we used $S$ for $S(z_+(\wcr),\wcr)$ and $S_a$ for $dS(z_+(\wcr),\wcr)/da$. For the parametrized dual curve we obtain
\begin{align*}
(S-aS_a,S_a).
\end{align*}
Computing the derivative $S_a$ we get
\begin{align*}
S_a&=\frac{\partial S}{\partial a}  +\frac{\partial S}{\partial z} (z_+(\wcr),\wcr)\frac{d z_+(\wcr)}{ d a} + \frac{\partial S}{\partial w}(z_+(\wcr),\wcr) \frac{d \wcr}{d a} \\
    &= -\ln(\wcr) -\left(\frac{1}{z_+} \frac{d z_+}{d w} +\frac{a}{w}\right)\bigg|_{w=\wcr} \frac{d \wcr}{d a}.
\end{align*}
From the definition of the critical point $\wcr$ we have
\begin{align*}
0&=\frac{d}{dw}S(z_+(w),w) \bigg |_{w=\wcr}= \left(-\frac{1}{z_+}\frac{dz_+}{dw}-\frac{a}{w}\right)\bigg|_{w=\wcr},
\end{align*}
which implies that $S_a=-\log(\wcr)$. It follows that the dual curve is 
\begin{equation}
\label{eq:amoeba-reparam}
-(\ln(z_+(\wcr)), \ln(\wcr)).
\end{equation}
From \eqref{eq:P} it is evident that boundary of the bounded region of the complement of the amoeba of $P(z,w)$ is the curve $(\log(z),\log(w))$ with $z,w>0, P(z,w)=0$. 
As we saw in Section \ref{sec:4dshape} $\wcr$ and $z_+(\wcr)$ are positive, so the curve \eqref{eq:amoeba-reparam} is nothing but a reparametrization (of the reflection with respect to the origin) of the boundary of the gaseous phase of the amoeba. 

Recall, that in the 2-directional case studied in Section \ref{sec:2dShape} the spectral polynomial $P$ is $P(z,w)=\frac{2d-z-w}{2(d-1)}$. Its Newton polygon (see \cite{kos2006} for a definition) does not have integer points in the interior, so its amoeba does not have a gaseous phase. In fact the boundary of the amoeba consists of the curves given implicitly by $e^x+e^y=2d$, $-e^x+e^y=2d$ and $e^x-e^y=2d$. In this case the limit shape of the Leaky-ASM is (the reflection with respect to the origin) of the dual of the portion of the amoeba given by $e^x+e^y=2d$. See Figure \ref{fig:2damoeba}.

\begin{figure}
    \centering
        \includegraphics[width=9cm]{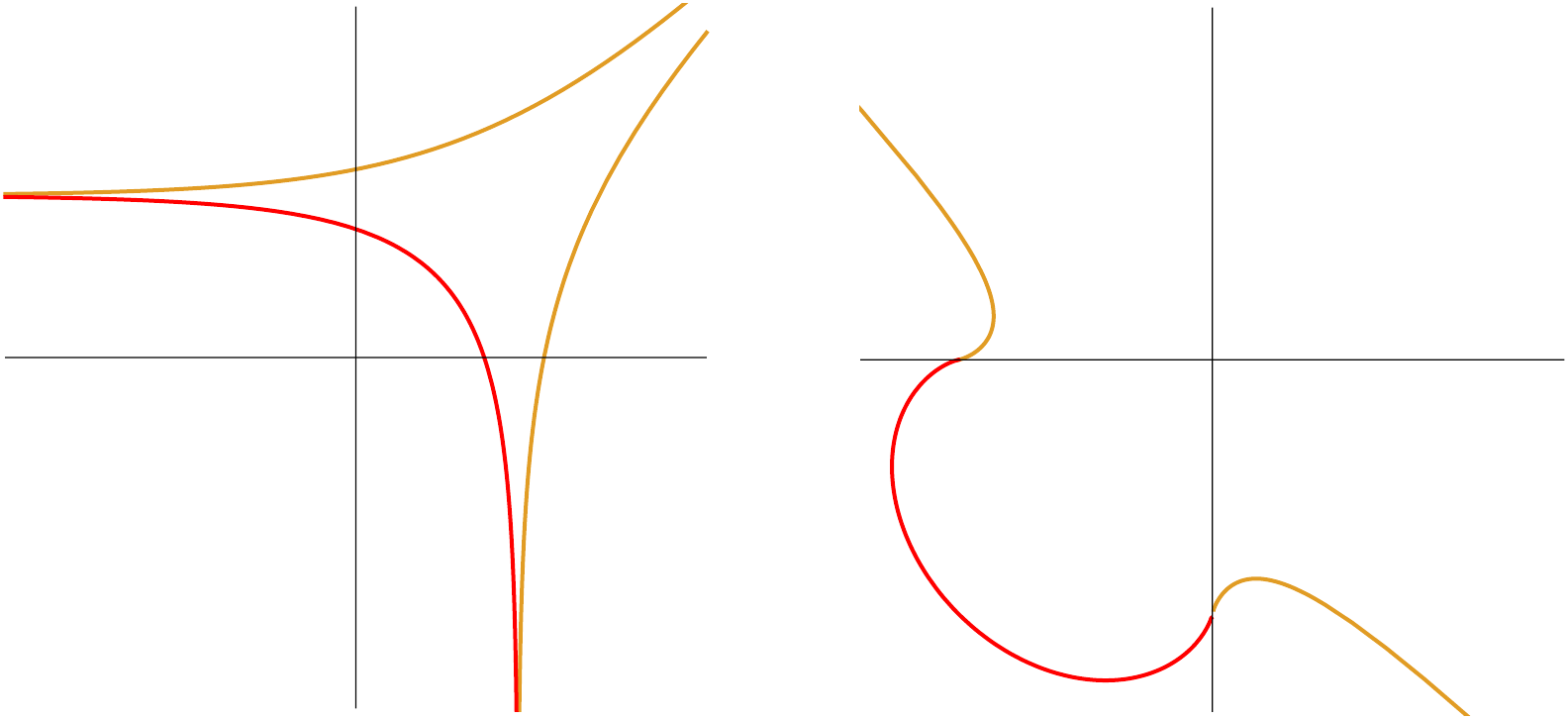}
        \caption{On the left we have the boundary of the amoeba of \(P(z,w)=\frac{2d-z-w}{2(d-1)}\) with $d=2$. On the right we have the dual curve. The red portions are the limit shape (in the plot on the right) and the corresponding section of the boundary of the amoeba (in the plot on the left).}
        \label{fig:2damoeba}
\end{figure}

\subsection*{Acknowledgments}   
We would like to thank Rick Kenyon for suggesting the study of the Leaky-ASM model and for useful discussions on the topic. We would also like to thank Arjun Krishnan and Henrik Shahgholian for useful discussions on sandpile models. Finally, we thank the anonymous referee for carefully reading the paper and suggesting a simplification to the proof of Lemma \ref{lem:prob-limit-shape}.

The second listed author was partially supported by the Simons Foundation Collaboration Grant No. 422190.

\bibliographystyle{hep}
\newcommand{\etalchar}[1]{$^{#1}$}

\end{document}